  \def\Gamma{Gamma}%
  \def\lambda{lambda}%
  \def\delta{delta}%
\newtheorem{theorem}{Theorem}[section]
\newtheorem{corollary}[theorem]{Corollary}
\newtheorem{proposition}[theorem]{Proposition}
\newtheorem{lemma}[theorem]{Lemma}
\newtheorem*{theorem*}{Theorem}
\theoremstyle{remark}
\newtheorem{remark}[theorem]{Remark}
\theoremstyle{definition}
\newtheorem{example}[theorem]{Example}
\newtheorem*{definition*}{Definition}
\newcommand{\C}{\mathbb{C}}
\newcommand{\Z}{\mathbb{Z}}
\newcommand{\Q}{\mathbb{Q}}
\newcommand{\cF}{\mathcal{F}}
\newcommand{\of}{\circ}
\newcommand{\ox}{\otimes}
\DeclareMathOperator{\Pf}{Pf}
\newcommand{\bibDate}[1]{}
\begin{document}
\title{Plethysm Stability of Schur's $Q$-functions}

\author{John~Graf}
\address{Department of Mathematics, North Carolina State University, Raleigh, NC 27695, USA}
\email[John~Graf (Corresponding Author)]{jrgraf@ncsu.edu}

\author{Naihuan~Jing}
\address{Department of Mathematics, North Carolina State University, Raleigh, NC 27695, USA}
\email[Naihuan~Jing]{jing@ncsu.edu}
\thanks{Supported in part by Simons Foundation MP-TSM-00002518 and NSFC 12171303}

\date{March 27, 2025} 
    
    \begin{abstract}
        Schur functions has been shown to satisfy certain plethysm stability properties and recurrence relations. In this paper, use vertex operator methods to study analogous stability properties of Schur's $Q$-functions. Although the two functions have similar stability properties, we find a special case where the plethysm of Schur's $Q$-functions exhibits linear increase.
    \end{abstract}

    \maketitle
    
    \tableofcontents
    
    \section{Introduction}

    Schur functions $S_\lambda$ form a basis of the ring of symmetric functions $\Lambda$ and correspond to irreducible representations of the symmetric group under the characteristic map \cite{macdonald:1995}. Similarly, Schur's $Q$-functions $Q_\lambda$ serve a similar role as a basis of the subring $\Gamma\subset\Lambda$ that correspond to irreducible projective representations of the double covering group of the symmetric group (also called the irreducible spin representation). Indeed, both bases have many similar properties. The Jacobi-Trudi formula provides a determinantal method for computing Schur functions, which shows that any irreducible representation of the symmetric group can be composed from the basic representation or the sign representation, and it allows one to compute $S_\lambda$ for compositions with negative parts. For the spin representation, the analogous Pfaffian formula also shows that any irreducible spin representation can be composed from the basic spin representation \cite{schur:1911}. In a previous paper \cite{graf-jing:2024}, we extended the Pfaffian formula for Schur's $Q$-functions to similarly allow compositions $\lambda$ with negative parts.

    Let $\pi_{\lambda}$ be the $GL$-representation associated with partition $\lambda$. Littlewood studied the plethysm $\pi_{\lambda}\circ\pi_{\mu}$, arising from the composition $\pi_{\mu}(\pi_{\lambda})$ in the category of $GL$-representations \cite{Lit, Littlewood}, and correspondingly the plethysm of any two symmetric functions can be computed via the Frobenius characteristic map (cf. \cite[p.~160]{macdonald:1995}). It is a main problem to find information on the coefficients $a^{\nu}_{\lambda,\mu}$ in the plethysm decomposition $\pi_{\lambda}\circ\pi_{\mu}=\oplus_{\nu}a^{\nu}_{\lambda,\mu}\pi_{\nu}$. However, these coefficients remain mysterious \cite{colmenarejo2024} even for small rank \cite{Paget-Wildon}. Some basics of plethysm are explained in \cite{loehr-remmel} and
    \cite{FJK}. Also see \cite{colmenarejo2024} for some recent developments.
    
    An important property for plethysm of Schur functions is the stability, which asserts that several sequences of plethysm coefficients have been shown to stabilize \cite{Wei, carre-thibon:1992}. Carr\'e and Thibon \cite{carre-thibon:1992} used methods arising from vertex operators to prove that the sequences 
    \begin{align*}
        (S_\lambda\of S_{p\mu},S_{s\nu})&\qquad\text{for }p\in\Z,\text{ }s=|\lambda|(|\mu|+p)-|\nu|,\\
        (S_{p\lambda}\of S_{\mu},S_{s\nu})&\qquad\text{for }p\in\Z,\text{ }s=(|\lambda|+p)|\mu|-|\nu|
    \end{align*}
    stabilize for large enough $p$, where $p\lambda=(p,\lambda_1,\ldots,\lambda_n)$. Additionally, Brion \cite{brion:1993} proved stability theorems using geometric methods, and Colmenarejo \cite{colmenarejo:2017} used combinatorial methods. Although these stability theorems have been widely studied for Schur functions, the analogous properties had not been established for Schur's $Q$-functions. 

    Hence, in this paper we study some analogous stability properties of Schur's $Q$-functions via a careful adaption of Carr\'e and Thibon's vertex operator methods for the case of twisted vertex operators. We find that plethysm of Schur's Q-functions almost enjoys the stability except in a special case! The idea of our current treatment can be traced back to two well-known constructions of the simplest affine Lie algebra $\widehat{\mathfrak sl}(2)$ by the homogeneous untwised vertex operators and principal twisted vertex operators (cf.\cite{FLM}) and their applications to symmetric functions \cite{jing:1991a}. In particular, we show that the sequences
    \begin{align*}
        (Q_\lambda\of Q_{p\mu},Q_{s\nu})&\qquad\text{for }p\in\Z,\text{ }s=|\lambda|(|\mu|+p)-|\nu|,\\
        (Q_{p\lambda}\of Q_{\mu},Q_{s\nu})&\qquad\text{for }p\in\Z,\text{ }s=(|\lambda|+p)|\mu|-|\nu|,~\ell(\mu)>1
    \end{align*}
    stabilize for large enough $p$. Meanwhile, the sequence 
    \[(Q_{p\lambda}\of Q_{(m)},Q_{s\nu})\qquad\text{for }p\in\Z,\text{ }s=(|\lambda|+p)m-|\nu|\]
    increases linearly for large enough $p$. 

    Furthermore, Carr\'e and Thibon \cite{carre-thibon:1992} used their vertex operator methods to prove a recurrence formula for the plethysm of Schur functions. As special cases, this formula reduces to different recurrence formulas provided by Butler and King \cite{butler-king:1973} and by Murnaghan \cite{murnaghan:1954}. Consequently, we also derive analogous recurrence formulas for Schur's $Q$-functions.


    The Hall-Littlewood functions $Q_\lambda(A;t)$ are a generalization of both the Schur and Schur's $Q$-functions, and they also have a vertex operator realization \cite{jing:1991b}. So, it is desirable to generalize these results. However, some formulas of Hall-Littlewood functions have not been established, and in particular we bring attention to the open problem of generalizing the Jacobi-Trudi formula to skew Hall-Littlewood functions $Q_{\lambda/\mu}(A;t)$.

    \section{Preliminaries}

    \subsection{Partitions}

    A \emph{composition} $\lambda=(\lambda_1,\ldots,\lambda_n)\in\Z^n$ is a sequence of integers, and it is a \emph{partition} if its \emph{parts} $\lambda_1,\lambda_2,\ldots$ are nonnegative and weakly decreasing. The \emph{length} $\ell(\lambda)$  of $\lambda$ is the number of nonzero parts. The \emph{weight} $|\lambda|$ of $\lambda$ is defined to be the sum of its parts. A partition is \emph{strict} if it has no repeated nonzero parts. 
    
    We define some operations to append or remove parts to a composition $\lambda$. First, for any integer $p\in\Z$ we define
        \[p\lambda:=(p,\lambda_1,\ldots,\lambda_n).\]
    Next, for any integer $i\in\{1,2,\ldots,n\}$, we define
    \[\lambda\setminus\{\lambda_i\}:=(\lambda_1,\ldots,\lambda_{i-1},\lambda_{i+1},\ldots,\lambda_n).\]

    \subsection{Schur's $Q$-functions}

    We will provide a brief overview of Schur's $Q$-functions \cite{schur:1911, macdonald:1995}. A more detailed treatment can be found in \cite{graf-jing:2024}.
	
    An \emph{alphabet} $A=\{a_1,a_2,\ldots\}$ is a set of variables, which may be infinite. The functions $q_n(A)$ of the alphabet $A$ are defined by the generating series
    \[\kappa_z(A):=\prod_{a\in A}\frac{1+za}{1-za}=\sum_{n\in\Z}q_n(A)z^n.\]
    Note that $q_0=1$, and that $q_{n}=0$ for $n<0$. For any integers $r,s\in\Z$, we define
    \begin{equation}\label{Q_(r,s)}
        Q_{(r,s)}(A):=q_r(A)q_s(A)+2\sum_{i=1}^s(-1)^iq_{r+i}(A)q_{s-i}(A).
    \end{equation}
    For any composition $\lambda\in\Z^n$, we define \emph{Schur's $Q$-function} $Q_\lambda(A)$ to be the Pfaffian $\Pf$ of the matrix $M(\lambda)$ with entries 
    \[M(\lambda)_{ij}=
        \begin{cases}
            Q_{(\lambda_i,\lambda_j)}&\text{if }i<j,\\
            0&\text{if }i=j,\\
            -Q_{(\lambda_j,\lambda_i)}&\text{if }i>j,
        \end{cases}\]
    where we use $\lambda0:=(\lambda_1,\ldots,\lambda_n,0)$ if $n$ is odd, and where $(\Pf M(\lambda))^2=\det M$. For any partitions $\lambda\in\Z^n$ and $\mu\in\Z^m$, we define the \emph{skew Schur's $Q$-function} $Q_{\lambda/\mu}(A)$ by
    \begin{equation}\label{Q_lambda/mu formula}
        Q_{\lambda/\mu}(A):=
        \begin{cases}
            \Pf M(\lambda,\mu)&\text{if }n+m\text{ is even},\\
            \Pf M(\lambda0,\mu)&\text{if }n+m\text{ is odd},
        \end{cases}
    \end{equation}
    where $M(\lambda,\mu)$ denotes the matrix
    \begin{equation*}\label{M(lambda,mu)}
        M(\lambda,\mu):=
        \begin{pmatrix}
            M(\lambda)&N(\lambda,\mu)\\
            -N(\lambda,\mu)^t&0
        \end{pmatrix},
    \end{equation*}
    and where $N(\lambda,\mu)$ is the $n\times m$ matrix
    \begin{equation*}
        N(\lambda,\mu):=
        \begin{pmatrix}
            q_{\lambda_1-\mu_m}&\cdots&q_{\lambda_1-\mu_1}\\
            \vdots&&\vdots\\
            q_{\lambda_n-\mu_m}&\cdots&q_{\lambda_n-\mu_1}
        \end{pmatrix}.
    \end{equation*}
    
    Henceforth, omitting the variables of functions will be understood to mean we are using the alphabet $A$. That is, we write $q_n,Q_\lambda,\kappa_z,$ etc. to denote $q_n(A),Q_\lambda(A),\kappa_z(A),$ etc., respectively.

    We define the ring $\Gamma:=\Q<q_1,q_2,q_3,\ldots>$ to be the ring spanned by the $q_n$. It follows from the definition of $\kappa_z$ that the odd $q_n$'s are algebraically independent, and $\Gamma=\Z[q_1,q_3,q_5,\ldots]$. The $Q_\lambda$ form a basis for $\Gamma$, where $\lambda$ ranges over strict partitions. Note that $\Gamma$ is a subring of the ring $\Lambda$ of symmetric functions. See Appendix \ref{appendix:schur} for details about $\Lambda$ and Schur functions, and see Appendix \ref{appendix:combinatorial} for details about alternative constructions of Schur's $Q$-functions.




    \subsection{Inner Product and Adjoints}
    We define an inner product on the ring $\Gamma$ by setting
    \begin{equation}(Q_\lambda,Q_\mu)=2^{\ell(\lambda)}\delta_{\lambda\mu}
    \end{equation}
    for strict partitions $\lambda$ and $\mu$. We extend the inner product to $\Gamma\ox\C[z]$ by $\C[z]$-linearity, where $z$ is an indeterminate.

    For any partitions $\lambda$ and $\mu$, the skew Schur's $Q$-function $Q_{\lambda/\mu}$ satisfies the identity
    \begin{equation}\label{e:skew}
    (Q_{\lambda/\mu},F)=(Q_\lambda,2^{-\ell(\mu)}Q_\mu F)
    \end{equation}
    for all $F\in\Gamma$.
    
    For any function $F\in\Gamma$, we let $F^\perp$ denote the adjoint of multiplication by $F$ with respect to $(\cdot,\cdot)$,
    \begin{equation}
      (F^\perp Q_\lambda,Q_\mu)=(Q_\lambda,FQ_\mu).  
    \end{equation}
    If $F=\sum_n F_nz^n$ is an infinite series, we denote $F^\perp:=\sum_n z^nF_n^\perp$.
    
    
    
    

    \subsection{Plethysm}

    \subsubsection*{$\lambda$-ring formulism of plethysm}
    
    We use the $\lambda$-ring definition of plethysm from \cite{lascoux:2003} to define plethysm on $\Gamma$.
    
    Let $X$ be any alphabet, which may contain $A$. We associate every element $F\in\Gamma(A)$ with an operator, also denoted $F$, that acts on elements of the polynomial ring $\C[X]$. Let $x^\mu:=x_1^{\mu_1}x_2^{\mu_2}\cdots$, and suppose $P=\sum_\mu c_\mu x^\mu\in\C[X]$. We first define $q_n(P)$ by the generating series
    \begin{equation}\label{eqn:kappa_z(P)}
        \kappa_z(P)=\prod_\mu\left(\frac{1+zx^\mu}{1-zx^\mu}\right)^{c_\mu}=\sum_{n\in\Z}q_n(P)z^n.
    \end{equation}
    Next, note that any element $F(A)\in\Gamma(A)$ can be written as a polynomial in the $q_n(A)$; $F(A)=\cF(q_1(A),q_2(A),\ldots)$. So, we define plethysm of $F$ on $P$ as
    \[F(P):=\cF(q_1(P),q_2(P),\ldots).\]
    
    We denote the plethysm of $F,G\in\Gamma$ as either $F(G(A))$ or $F\of G(A)$. See Appendix \ref{appendix:schur} for the connection of our definition of plethysm on $\Gamma$ to plethysm on the larger ring $\Lambda$.



    \subsubsection*{Plethystic notation}

    Since we have $F(A)=F(a_1+a_2+\cdots)$ for all $F\in\Gamma$, we can identify an alphabet with the formal sum of its elements. For two alphabets $A$ and $B$, the sum $A+B$ is the \emph{disjoint} union of $A$ and $B$, so that
    \[kA=\underbrace{A+\cdots+A}_{k\text{ terms}}\]
    for any positive integer $k$. Equivalently, we have $\kappa_z(kA)=(\kappa_z(A))^k$ for positive integers $k$, which we extend by defining
    \[\kappa_z(kA):=(\kappa_z(A))^k,\qquad\text{for all }k\in\C.\]
    In particular, we have
    \begin{equation}\label{eqn:kappa_z(A+B)}
        \kappa_z(A\pm B)=\kappa_z(A)\kappa_z(B)^{\pm1}.
    \end{equation}
    Since $\kappa_z(0)=1$, it follows that $A-A=0$. For example, we have $F(A+B-B)=F(A)$ for all $F\in\Gamma$. Additionally, we define
    \[AB:=\sum_{\substack{a\in A\\b\in B}}ab,\]
    and so in particular $zA=za_1+za_2+\cdots$.

    \subsubsection*{Power sum computations}

    Computing specific examples of plethysm is often easiest using the power sum basis. For all $n\geq1$ we define the \emph{power sum function} $p_n$ by
    \begin{equation*}
        p_n(A):=\sum_{a\in A}a^n.
    \end{equation*}
    We note that $\Gamma$ is generated by the odd power sums, $\Gamma=\Q[p_1,p_3,p_5,\ldots]$. For a partition $\mu$, we define $p_\mu:=p_{\mu_1}p_{\mu_2}\cdots$, and
    \begin{align*}
        z_\mu&:=\prod_{i\geq1}i^{m_i(\mu)}\cdot m_i(\mu)!,
    \end{align*}
    where $m_i(\mu)=\#\{j\mid \mu_j=i\}$ is the number of parts of $\mu$ equal to $i$. In terms of the power sum basis, for $n\geq1$ we have \cite[p.~260]{macdonald:1995}
    \begin{equation}\label{eqn:power sums}
        q_n=\sum_{\mu\text{ odd}}z_{\mu}^{-1}2^{\ell(\mu)}p_\mu,
    \end{equation}
    where a partition $\mu$ is \emph{odd} if each of its parts are odd. To compute $F\of G$ for $F,G\in\Gamma$, we can write $F$ and $G$ in the power sum basis using (\ref{eqn:power sums}), then compute the plethysm using the method described in \cite[p.~168]{loehr-remmel}.



    Let $\omega:\Lambda\to\Lambda$ be the involution $\omega(h_n)=e_n$ exchanging the homogeneous and elementary symmetric functions. Since $\omega(p_n)=(-1)^{n-1}p_n$, we have that $\omega(p_{2n+1})=p_{2n+1}$, and hence all elements of $\Gamma$ are invariant under $\omega$. In general, for a homogeneous symmetric function $F\in\Lambda$ of degree $n$ we have $F(-A)=(-1)^n(\omega F)(A)$ \cite[p.~137]{macdonald:1995}. Thus, for all homogeneous $F\in\Gamma$ of degree $n$ we have
    \begin{equation}\label{eqn:negative alphabet}
        F(-A)=(-1)^nF(A).
    \end{equation}

    \subsubsection*{Plethysm properties}
    
    Before we continue, we state some important identities of plethysm.
    
    \begin{proposition} [Sum Rule]\label{sum rule}
        For any partition $\lambda$ and any two alphabets $A$ and $B$, we have
    \begin{equation}
     Q_\lambda(A+B)=\sum_\mu Q_{\lambda/\mu}(A)Q_\mu(B),   
    \end{equation}
    where the sum is over all partitions $\mu$.
    \end{proposition}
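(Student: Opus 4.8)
The plan is to reduce the identity $Q_\lambda(A+B)=\sum_\mu Q_{\lambda/\mu}(A)Q_\mu(B)$ to the corresponding statement for the generating functions $q_n$, and then use the Pfaffian definitions of $Q_\lambda$ and $Q_{\lambda/\mu}$ together with the inner-product characterization \eqref{e:skew}. First I would record the fundamental multiplicativity of the generating series under disjoint union of alphabets, $\kappa_z(A+B)=\kappa_z(A)\kappa_z(B)$ (this is \eqref{eqn:kappa_z(A+B)} with the plus sign), which on the level of coefficients gives $q_n(A+B)=\sum_{i+j=n}q_i(A)q_j(B)$, i.e. the $q_n$'s behave like a "comultiplication-friendly" family. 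It is then a short computation from \eqref{Q_(r,s)} that $Q_{(r,s)}(A+B)$ expands into the appropriate sum of products $Q_{(\cdot,\cdot)}(A)\,q_\cdot(B)$, $q_\cdot(A)\,Q_{(\cdot,\cdot)}(B)$, and so on — in other words the $2\times 2$ building blocks already satisfy the sum rule.

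The cleanest route, however, avoids brute-force manipulation of Pfaffians. I would instead verify the identity by pairing both sides against an arbitrary $F\in\Gamma$ under the inner product, using the defining adjoint relation \eqref{e:skew}: $(Q_{\lambda/\mu},F)=(Q_\lambda,2^{-\ell(\mu)}Q_\mu F)$. Summing over $\mu$, the right-hand side of the claimed identity pairs against $F(B)$-type test functions in a way that collapses, via completeness of $\{Q_\mu\}$ as a basis of $\Gamma(B)$ and orthogonality $(Q_\mu,Q_\nu)=2^{\ell(\mu)}\delta_{\mu\nu}$, to exactly $Q_\lambda(A+B)$. Concretely, one fixes the $B$-alphabet, expands $Q_\lambda(A+B)$ in the basis $\{Q_\mu(B)\}$ of $\Gamma(B)$ with coefficients in $\Gamma(A)$, and identifies the coefficient of $Q_\mu(B)$ as $2^{-\ell(\mu)}(Q_\lambda(A+B),Q_\mu(B))_B$, which by the adjoint/skewing property is precisely $Q_{\lambda/\mu}(A)$. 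This is the standard coproduct argument for $Q$-functions, transcribed into the vertex-operator/inner-product language the paper uses.

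Alternatively — and this is the version I would actually write out if the inner-product machinery is not yet fully set up at this point in the paper — one can argue directly from the Pfaffian formula \eqref{Q_lambda/mu formula}. The matrix $M(\lambda)$ evaluated at $A+B$ has entries $Q_{(\lambda_i,\lambda_j)}(A+B)$, each of which (by the $2\times 2$ computation above) decomposes as a sum over "how many boxes came from $B$". Applying the Pfaffian expansion and collecting terms by the sub-alphabet-$B$ content reorganizes $\Pf M(\lambda)(A+B)$ into a sum indexed by partitions $\mu$, where the $\mu$-term factors as $\Pf$ of the enlarged matrix $M(\lambda,\mu)$ evaluated at $A$ — this is exactly the block matrix with the $N(\lambda,\mu)$ corner built from the $q_{\lambda_i-\mu_j}(A)$ — times $Q_\mu(B)$. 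The bookkeeping of signs between the $n+m$ even and odd cases, and checking that the padding convention $\lambda 0$ is respected under the expansion, is the fiddly part.

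The main obstacle I anticipate is the Pfaffian combinatorics in this last approach: tracking signs in the Pfaffian expansion when passing from $M(\lambda)$ to the enlarged $M(\lambda,\mu)$, and in particular handling the parity issue (whether to append a zero part) uniformly. The inner-product argument sidesteps this entirely, so I would lead with it, provided \eqref{e:skew} and the basis/orthogonality facts are available — which they are, as stated earlier in the excerpt. It is worth noting that the sum rule is well known for $Q$-functions (it is the coproduct formula in $\Gamma$), so the substance here is purely making it compatible with the paper's Pfaffian-with-negative-parts conventions from \cite{graf-jing:2024}; the identity $q_n(A+B)=\sum_{i+j=n}q_i(A)q_j(B)$ holds for all $n\in\Z$ once one recalls $q_n=0$ for $n<0$, so no special care is needed for the extended (possibly negative-part) compositions beyond the sign conventions already in place.
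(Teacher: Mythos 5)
The paper does not prove this proposition from scratch: its entire proof is the observation that the identity is equation (5.5) of Macdonald (p.~228) for Hall--Littlewood functions, specialized at $t=-1$. So any honest derivation, like yours, is necessarily a different route, and the comparison is really about whether your intrinsic argument is complete. Your preferred inner-product route has a gap as written. The relation \eqref{e:skew} is a statement in a \emph{single} alphabet: $(Q_{\lambda/\mu},F)=(Q_\lambda,2^{-\ell(\mu)}Q_\mu F)$. To identify the coefficient of $Q_\mu(B)$ in the $B$-expansion of $Q_\lambda(A+B)$ with $Q_{\lambda/\mu}(A)$, you must also know that the map $F\mapsto F(A+B)$ (the coproduct of $\Gamma$) is adjoint to multiplication, i.e.\ $\bigl(F(A+B),\,G(A)H(B)\bigr)_{A\otimes B}=(F,GH)$, which is usually proved via the reproducing kernel $\prod_{i,j}\frac{1+a_ib_j}{1-a_ib_j}$ for the $Q$-function inner product. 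That duality is not among the facts you cite, and it is essentially equivalent in content to the sum rule itself, so invoking ``completeness and orthogonality'' alone leaves the argument close to circular; you would need to state and prove the Cauchy-type kernel identity first. Your third route (direct Pfaffian expansion of $M(\lambda)$ over $A+B$, reorganizing into $\Pf M(\lambda,\mu)(A)\cdot Q_\mu(B)$) is the one that actually matches this paper's definitions, since here $Q_{\lambda/\mu}$ is \emph{defined} by the Pfaffian \eqref{Q_lambda/mu formula} rather than by skewing; it is fiddlier but self-contained. Note finally that the paper's one-line proof implicitly uses the compatibility of its Pfaffian definition with Macdonald's skew $Q$-functions (established in the authors' earlier paper), a point your write-up correctly flags as the real substance of the verification.
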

    \begin{proof}
        This is a restatement of equation (5.5) from \cite[p.~228]{macdonald:1995}, setting $t=-1$.
    \end{proof}
    More generally, we have the following result.
    \begin{corollary}[Skew Sum Rule]
        For any partitions $\lambda,\mu$ and any two alphabets $A$ and $B$, we have
        \begin{equation}
         Q_{\lambda/\mu}(A+B)=\sum_\nu Q_{\lambda/\nu}(A)Q_{\nu/\mu}(B),   
        \end{equation}
        where the sum is over partitions $\nu$.
    \end{corollary}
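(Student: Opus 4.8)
\emph{Proof proposal.} The plan is to derive this from the Sum Rule (Proposition~\ref{sum rule}) by an associativity argument with three alphabets; this is the concrete shadow of the coassociativity of the comultiplication on $\Gamma$ sending $p_k\mapsto p_k\otimes1+1\otimes p_k$.

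First I would introduce a third alphabet $C$ disjoint from $A$ and $B$ and expand $Q_\lambda(A+B+C)$ in two ways. Grouping the sum as $(A+B)+C$ and applying Proposition~\ref{sum rule} once gives
\[
Q_\lambda(A+B+C)=\sum_\nu Q_{\lambda/\nu}(A+B)\,Q_\nu(C),
\]
whereas grouping it as $A+(B+C)$, applying Proposition~\ref{sum rule}, and then applying it a second time to $Q_\mu(B+C)$ gives
\[
Q_\lambda(A+B+C)=\sum_\mu Q_{\lambda/\mu}(A)\,Q_\mu(B+C)=\sum_\nu\Bigl(\sum_\mu Q_{\lambda/\mu}(A)\,Q_{\mu/\nu}(B)\Bigr)Q_\nu(C).
\]
Since $Q_{\lambda/\mu}(A)=0$ unless $|\mu|\le|\lambda|$, all sums here are finite, so there is no convergence issue.

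Next I would equate the coefficients of $Q_\nu(C)$ on the two sides. The only real content beyond bookkeeping is the legitimacy of this step: the coefficients lie in $\Gamma(A+B)\subseteq\Gamma(A)\otimes\Gamma(B)$ rather than in $\Q$, so one needs $\{Q_\nu(C):\nu\text{ strict}\}$ to be linearly independent over $\Gamma(A+B)$. This holds because the odd power sums of $A$, of $B$, and of $C$ together form three mutually disjoint families of algebraically independent generators, so $\Gamma(C)$ sits in $\Gamma(A)\otimes\Gamma(B)\otimes\Gamma(C)$ as a tensor factor and any $\Q$-basis of it remains independent after extending scalars to $\Gamma(A+B)$. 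To use this cleanly I would first rewrite both displayed sums so that $\nu$ runs only over strict partitions, expressing $Q_\nu(C)$ for non-strict $\nu$ via the Pfaffian straightening relations and collecting terms; comparing coefficients then yields
\[
Q_{\lambda/\nu}(A+B)=\sum_\mu Q_{\lambda/\mu}(A)\,Q_{\mu/\nu}(B),
\]
and relabeling $\mu\leftrightarrow\nu$ is the assertion.

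The main obstacle is therefore nothing more than this linear‑independence/coefficient‑extraction step; granting it, the corollary is a two‑line consequence of Proposition~\ref{sum rule}. I would also note the slicker reformulation: Proposition~\ref{sum rule} says $\Delta Q_\lambda=\sum_\mu Q_{\lambda/\mu}\otimes Q_\mu$ in the Hopf algebra $\Gamma$, and the corollary is then immediate from coassociativity of $\Delta$; but I would carry out the alphabet version above, since it relies only on structures already introduced.
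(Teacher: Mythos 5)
Your argument is the same as the paper's: expand $Q_\lambda(A+B+C)$ two ways via Proposition~\ref{sum rule} and associativity of alphabet addition, then equate coefficients of the $Q$-functions in the auxiliary alphabet $C$. The extra justification you give for the coefficient-extraction step (linear independence of the $Q_\nu(C)$ over the ring generated by $A$ and $B$) is correct and simply makes explicit what the paper leaves implicit.
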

    \begin{proof}
        Following the analogous proof from \cite[p.~72]{macdonald:1995} for Schur functions, we have from Proposition \ref{sum rule} that
        \begin{align*}
            \sum_\mu Q_{\lambda/\mu}(A+B)Q_\mu(C)&=Q_\lambda((A+B)+C)\\
            &=Q_\lambda(A+(B+C))\\
            &=\sum_\nu Q_{\lambda/\nu}(A)Q_\nu(B+C)\\
            &=\sum_{\mu,\nu}Q_{\lambda/\nu}(A)Q_{\nu/\mu}(B)Q_\mu(C).
        \end{align*}
        Finally, we equate the coefficients of $Q_\mu(C)$ in the beginning and end of the above chain of equalities.
    \end{proof}
    
    If we use the alphabets $A=\{z\}$, $A=\{-z\}$, or $A=\{2z\}$, then we get the following results.
    
    \begin{proposition}\label{Q_lambda(z)}
        We have
        \begin{align}
            q_0(z)&=1,\\
            q_n(z)&=2z^n\qquad(n\geq1), \label{q_r(z)} \\
            q_n(-z)&=2(-z)^n\quad(n\geq1), \label{q_r(-z)}\\
            q_n(2z)&=4nz^n\qquad(n\geq1). \label{q_r(2z)}
        \end{align}
        Moreover, if $\lambda$ is a partition such that $\ell(\lambda)>1$, then $Q_\lambda(z)=0$. 
    \end{proposition}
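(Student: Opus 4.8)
The plan is to obtain the four evaluations of $q_n$ by expanding the defining generating series directly, and then to deduce $Q_\lambda(z)=0$ from the Pfaffian definition once the values of the two-part functions $Q_{(r,s)}(z)$ are known. Writing $w$ for the auxiliary variable of $\kappa$ to avoid a clash with the alphabet $z$, the single-letter alphabet $A=\{z\}$ gives $\kappa_w(z)=\frac{1+wz}{1-wz}=(1+wz)\sum_{k\ge 0}(wz)^k=1+2\sum_{k\ge 1}(wz)^k$, so $q_0(z)=1$ and $q_n(z)=2z^n$ for $n\ge 1$. For $-z$ one may either expand $\kappa_w(-z)=\frac{1-wz}{1+wz}=-1+2\sum_{k\ge 0}(-wz)^k=1+2\sum_{k\ge 1}(-wz)^k$, or observe that $q_n$ is homogeneous of degree $n$ so that \eqref{eqn:negative alphabet} gives $q_n(-z)=(-1)^nq_n(z)=2(-z)^n$. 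For $2z$, the plethystic convention $\kappa_w(2z)=\kappa_w(z)^2$ yields $\kappa_w(2z)=\bigl(1+\tfrac{2wz}{1-wz}\bigr)^2=1+\tfrac{4wz}{1-wz}+\tfrac{4w^2z^2}{(1-wz)^2}$, and reading off the coefficient of $w^n$ gives $4+4(n-1)=4n$ for $n\ge 1$, i.e.\ $q_n(2z)=4nz^n$.

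Next I would reduce the vanishing statement to the two-part case. Using $q_0=1$, $q_n(z)=2z^n$ for $n\ge 1$, and $q_m=0$ for $m<0$, the definition \eqref{Q_(r,s)} gives, for all integers $r,s\ge 1$,
\[
 Q_{(r,s)}(z)=q_r(z)q_s(z)+2\sum_{i=1}^{s}(-1)^iq_{r+i}(z)q_{s-i}(z)
 =z^{r+s}\Bigl(4+8\sum_{i=1}^{s-1}(-1)^i+4(-1)^s\Bigr)=0,
\]
because each term with $1\le i\le s-1$ contributes $4z^{r+s}$, the term $i=s$ contributes $2z^{r+s}$ (as $q_0=1$), and $8\sum_{i=1}^{s-1}(-1)^i=-4\bigl(1+(-1)^s\bigr)$ exactly cancels $4+4(-1)^s$.

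Finally I would run the Pfaffian argument. Let $\lambda$ be a partition with $\ell:=\ell(\lambda)>1$; since trailing zero parts do not change $Q_\lambda$, we may assume $\lambda=(\lambda_1,\ldots,\lambda_\ell)$ with $\lambda_1\ge\cdots\ge\lambda_\ell\ge 1$. If $\ell$ is even, every off-diagonal entry of the $\ell\times\ell$ skew-symmetric matrix $M(\lambda)$ equals $Q_{(\lambda_i,\lambda_j)}(z)=0$ by the previous step, so $M(\lambda)=0$ and $Q_\lambda(z)=\Pf M(\lambda)=0$. If $\ell$ is odd, then $Q_\lambda(z)$ is the Pfaffian of the $(\ell+1)\times(\ell+1)$ matrix $M(\lambda0)$ with $\lambda0=(\lambda_1,\ldots,\lambda_\ell,0)$; in the expansion of this Pfaffian over perfect matchings of $\{1,\ldots,\ell+1\}$, each matching pairs the index $\ell+1$ with a single index $k\le\ell$ and pairs the remaining $\ell-1\ge 2$ indices — all corresponding to positive parts — among themselves, so every monomial carries a factor $Q_{(\lambda_i,\lambda_j)}(z)=0$ and hence vanishes.

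I expect the only mildly delicate points to be the alternating-sum cancellation in the two-part computation and keeping the trailing-zero and padding conventions straight when passing to the Pfaffian; everything else is routine series expansion.
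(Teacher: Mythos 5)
Your proof is correct and follows the same overall route as the paper: expand $\kappa_w(z)$ to get $q_0(z)=1$ and $q_n(z)=2z^n$, deduce $q_n(-z)$ from homogeneity, and reduce the vanishing of $Q_\lambda(z)$ to the two-part computation $Q_{(r,s)}(z)=0$ for $r,s\geq 1$. Two small differences are worth noting. For \eqref{q_r(2z)} you square the generating series $\kappa_w(2z)=\kappa_w(z)^2$ and read off coefficients, whereas the paper convolves via the sum rule $q_n(2z)=\sum_i q_{n-i}(z)q_i(z)$; these are trivially equivalent. For the vanishing statement, the paper simply asserts that it suffices to treat $\ell(\lambda)=2$ because $Q_\lambda$ is a polynomial in the $Q_{(r,s)}$, which is slightly glib when $\ell(\lambda)$ is odd: the padded matrix $M(\lambda 0)$ then contains entries $Q_{(\lambda_i,0)}(z)=q_{\lambda_i}(z)\neq 0$. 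Your expansion of the Pfaffian over perfect matchings, observing that every matching must pair at least two of the $\ell-1\geq 2$ remaining positive parts with each other and hence picks up a vanishing factor $Q_{(\lambda_i,\lambda_j)}(z)$, makes this reduction explicit and is actually more complete than the paper's one-line justification. The alternating-sum cancellation in your two-part computation matches the paper's.
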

    \begin{proof}
        Let $w$ be another indeterminate. From the definition of plethysm, we have
        \begin{align*}
            \kappa_w(z)&=\frac{1+wz}{1-wz}\\
            &=(1+wz)\sum_{n\geq0}(wz)^n\\
            &=\sum_{n\geq0}z^nw^n+\sum_{n\geq1}z^nw^n\\
            &=1+\sum_{n\geq1}(2z^n)w^n
        \end{align*}
        proving the first two equations. Equation (\ref{q_r(-z)}) is an immediate consequence of (\ref{q_r(z)}) and (\ref{eqn:negative alphabet}).

        Now, by the sum rule we have
        \begin{align*}
            q_n(2z)&=\sum_{i=0}^n q_{n-i}(z)q_i(z)\\
            &=2q_n(z)+\sum_{i=1}^{n-1} q_{n-i}(z)q_i(z)\\
            &=4z^n+(n-1)4z^n\\
            &=4nz^n.
        \end{align*}

        For the last result, it suffices to show this for $\ell(\lambda)=2$ since we define $Q_\lambda$ as a polynomial in the functions $Q_{(r,s)}$. So, suppose $\ell(\lambda)=2$, then we see that
        \begin{align*}
            Q_{(r,s)}(z)&=q_r(z)q_s(z)+2\sum_{i=1}^s(-1)^iq_{r+i}(z)q_{s-i}(z)\\
            &=2z^r\cdot2z^s+2\sum_{i=1}^{s-1}(-1)^i2z^{r+i}\cdot2z^{s-i}+2(-1)^s\cdot 2z^{r+s}\cdot 1\\
            &=4z^{r+s}\left(1+(-1)^s+2\sum_{i=1}^{s-1}(-1)^i\right).
        \end{align*}
        If $s$ is even, then in the parentheses we have $1+1+2\cdot (-1)=0$. If $s$ is odd, then we have $1-1+2\cdot0=0$. Therefore, in either case we have $Q_{(r,s)}(z)=0$.
    \end{proof}
    
    \begin{proposition}\label{plethysm zA}
        For any partition $\lambda$, we have
        \begin{equation}
            Q_\lambda(zA)=z^{|\lambda|}Q_\lambda(A).
        \end{equation}
    \end{proposition}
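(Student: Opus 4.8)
The plan is to track the effect of the substitution $A \mapsto zA$ on the generators $q_n$, and then propagate it through the Pfaffian definition of $Q_\lambda$. Since $Q_\lambda = \Pf M(\lambda)$ is, through \eqref{Q_(r,s)}, a polynomial expression in the $q_n$, the very definition of plethysm gives $Q_\lambda(zA) = \Pf M'$, where $M'$ is obtained from $M(\lambda)$ by replacing each $q_n$ by $q_n(zA)$; so it suffices to understand $q_n(zA)$ and then the matrix entries.

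First I would compute $q_n(zA)$. Writing $zA = za_1 + za_2 + \cdots$ and letting $w$ be an auxiliary indeterminate, \eqref{eqn:kappa_z(P)} gives
\[
\kappa_w(zA) = \prod_{a \in A}\frac{1 + wza}{1 - wza} = \sum_{n} q_n(A)\,(wz)^n = \sum_{n}\bigl(z^n q_n(A)\bigr)\,w^n,
\]
where the middle equality is the defining series of the $q_n(A)$ evaluated at $wz$. Comparing coefficients of $w^n$ yields $q_n(zA) = z^n q_n(A)$ for all $n$. Substituting this into \eqref{Q_(r,s)}, every summand of $Q_{(r,s)}$ acquires the common factor $z^{r+s}$, so $Q_{(r,s)}(zA) = z^{r+s} Q_{(r,s)}(A)$.

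Finally I would pass to the Pfaffian. Let $D = \mathrm{diag}(z^{\lambda_1}, \dots, z^{\lambda_n})$, appending a final diagonal entry $z^0 = 1$ when $n$ is odd to match the convention $\lambda 0$. By the previous paragraph, the matrix $M'$ above is exactly $D\, M(\lambda)\, D^{t}$, since its $(i,j)$ entry for $i<j$ is $z^{\lambda_i + \lambda_j} Q_{(\lambda_i,\lambda_j)}(A)$. Now use the elementary scaling identity $\Pf(D M D^{t}) = \det(D)\,\Pf(M)$, which is immediate from the expansion of the Pfaffian as a signed sum over perfect matchings: each matching covers every index exactly once, so each monomial of $\Pf M(\lambda)$ is multiplied by $z^{\lambda_1 + \cdots + \lambda_n} = z^{|\lambda|}$. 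Since $\det D = z^{\lambda_1 + \cdots + \lambda_n} = z^{|\lambda|}$, we conclude
\[
Q_\lambda(zA) = \Pf\bigl(D\, M(\lambda)\, D^{t}\bigr) = \det(D)\,\Pf\bigl(M(\lambda)\bigr) = z^{|\lambda|} Q_\lambda(A).
\]
I do not expect a real obstacle here; the only points requiring care are recording the Pfaffian scaling identity and noting that the odd-length padding is harmless, since the padded entry contributes the factor $z^0 = 1$. Alternatively, one can bypass the Pfaffian by observing that $Q_\lambda$ is homogeneous of degree $|\lambda|$ in the grading $\deg q_n = n$, so that $q_n \mapsto z^n q_n$ rescales $Q_\lambda$ by $z^{|\lambda|}$.
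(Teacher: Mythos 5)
Your proof is correct, but it takes a more structural route than the paper, whose entire argument is the one-line observation that $zA=za_1+za_2+\cdots$, so evaluating at $zA$ is the substitution $a_i\mapsto za_i$, and $Q_\lambda(A)$ is homogeneous of degree $|\lambda|$ in the $a_i$ — hence the factor $z^{|\lambda|}$. You instead work at the level of the generators and the defining Pfaffian: you derive $q_n(zA)=z^nq_n(A)$ from the generating series, push it through \eqref{Q_(r,s)} to get $Q_{(r,s)}(zA)=z^{r+s}Q_{(r,s)}(A)$, and then invoke the diagonal-congruence identity $\Pf\bigl(DM(\lambda)D^{t}\bigr)=\det(D)\Pf\bigl(M(\lambda)\bigr)$, handling the odd-length padding correctly since the appended part $0$ contributes $z^0=1$. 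This buys explicitness — it never leaves the ring $\Gamma$ and exhibits exactly where each power of $z$ comes from, and the identity $q_n(zA)=z^nq_n(A)$ is independently useful (the paper reuses this fact later, e.g.\ inside the proof of Lemma \ref{as_prod}) — at the cost of quoting the Pfaffian scaling identity, which the paper avoids entirely. Your closing remark, that $Q_\lambda$ is homogeneous of degree $|\lambda|$ for the grading $\deg q_n=n$, is essentially the paper's argument restated in the $q$-generators, so the two approaches converge there; either is perfectly adequate.
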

\begin{proof}
        This equation follows from the fact that $zA=za_1+za_2+\cdots$, and the fact that $Q_\lambda(A)$ is a homogeneous polynomial in the $a_i$ of degree $|\lambda|$.
    \end{proof}
    
    \subsection{Vertex operator formalism of Schur Q-functions}

    We recall several identities from \cite{graf-jing:2024}, which will be useful in our consideration.
    First, we calculate the action of $q_r^\perp$ as follows.
    
    \begin{proposition}\label{adjoint}
        For any partition $\lambda$, we have
        \begin{align*}
            q_r^\perp Q_\lambda&=2Q_{\lambda/(r)}\qquad(r\geq1),\\
            q_0^\perp Q_\lambda&=Q_{\lambda}.
        \end{align*}
    \end{proposition}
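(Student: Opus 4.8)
The plan is to deduce both identities directly from the skew adjunction formula \eqref{e:skew} together with nondegeneracy of the inner product on $\Gamma$, so that the vertex-operator machinery is not actually needed here.

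First I would record the base case $Q_{(r)}=q_r$: expanding the Pfaffian definition of $Q_{(r)}$ with $\lambda0=(r,0)$ gives $Q_{(r)}=\Pf\!\begin{pmatrix}0&Q_{(r,0)}\\-Q_{(r,0)}&0\end{pmatrix}=Q_{(r,0)}=q_rq_0=q_r$, and for $r\geq1$ the partition $(r)$ is strict of length $1$. Now fix $r\geq1$ and let $F\in\Gamma$ be arbitrary. By the definition of the adjoint (which is linear in the second argument, hence extends from basis elements $Q_\mu$ to all of $\Gamma$) we have $(q_r^\perp Q_\lambda,F)=(Q_\lambda,q_rF)=(Q_\lambda,Q_{(r)}F)$. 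On the other hand, \eqref{e:skew} applied with $\mu=(r)$, $\ell(\mu)=1$, yields $(Q_{\lambda/(r)},F)=(Q_\lambda,2^{-1}Q_{(r)}F)$, i.e. $(Q_\lambda,Q_{(r)}F)=2\,(Q_{\lambda/(r)},F)$. Combining the two displays gives $(q_r^\perp Q_\lambda-2Q_{\lambda/(r)},F)=0$ for every $F\in\Gamma$. Since the Gram matrix of $(\cdot,\cdot)$ in the basis $\{Q_\mu:\mu\text{ strict}\}$ is diagonal with nonzero entries $2^{\ell(\mu)}$, the form is nondegenerate, and therefore $q_r^\perp Q_\lambda=2Q_{\lambda/(r)}$.

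For $r=0$ the argument is immediate: $q_0=1$, so $q_0^\perp$ is the adjoint of multiplication by the identity, hence the identity operator, giving $q_0^\perp Q_\lambda=Q_\lambda$ (equivalently, this is \eqref{e:skew} with $\mu=\emptyset$).

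There is essentially no hard step here; the only points that deserve a word of care are that $\lambda$ need not be strict — but $Q_\lambda$ and $Q_{\lambda/(r)}$ still lie in $\Gamma$ by their Pfaffian definitions, so every identity above extends by $\Q$-linearity from the strict case — and that one tests the equality against \emph{all} of $\Gamma$ rather than only basis vectors, which is legitimate precisely because the adjoint relation $(F^\perp x,\,\cdot\,)=(x,F\,\cdot\,)$ is linear in its free argument.
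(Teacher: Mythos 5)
Your proof is correct, and it cannot be compared line-by-line with an internal argument because the paper states Proposition \ref{adjoint} without proof, recalling it from \cite{graf-jing:2024}. Your route---combine the adjunction identity \eqref{e:skew} with $\mu=(r)$, the observation $Q_{(r)}=Q_{(r,0)}=q_rq_0=q_r$ (your Pfaffian computation of this is right), bilinearity of the pairing, and nondegeneracy of the diagonal Gram matrix $2^{\ell(\mu)}\delta_{\lambda\mu}$---is a clean, self-contained deduction relative to the preliminaries as this paper orders them, and the $r=0$ case via $q_0=1$ is immediate. The one caveat worth stating explicitly is that all the substance is outsourced to \eqref{e:skew}, which is itself asserted here without proof: in treatments where the skew function is \emph{defined} by that adjunction (e.g.\ Macdonald at $t=-1$, where $P_{(r)}=\tfrac12 q_r$), your argument is essentially an unwinding of definitions plus the identification $Q_{(r)}=q_r$, whereas in this paper $Q_{\lambda/\mu}$ is defined by the Pfaffian formula \eqref{Q_lambda/mu formula}, and in the source \cite{graf-jing:2024} the proposition you are proving may well be an ingredient in establishing \eqref{e:skew}. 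So your proof is valid as a deduction within the logical layout of the present paper, but if one had to build everything from the Pfaffian definition alone, the honest work would be in proving \eqref{e:skew} (or the proposition directly), which neither the paper nor your argument does here.
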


    Next, appending a negative part to $\lambda$ is, up to a coefficient, the same as removing a part from $\lambda$.
    
    \begin{proposition}\label{Q_(-p)lambda}
        Let $p>0$, be a positive integer and let $\lambda$ be a strict partition, then
        \[Q_{(-p)\lambda}=
            \begin{cases}
                (-1)^{p+i+1}2Q_{\lambda\setminus\{\lambda_i\}}&\text{if }p=\lambda_i\text{ for some i},\\
                0&\text{otherwise}.
            \end{cases}
        \]
    \end{proposition}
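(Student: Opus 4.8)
The plan is to reduce the identity to a Pfaffian cofactor expansion along the first row, after first recording the values of the two-part functions $Q_{(-p,s)}$. First I would compute $Q_{(-p,s)}$ for $p\ge 1$ and $s\ge 0$ directly from \eqref{Q_(r,s)}: since $q_{-p}=0$ the term $q_{-p}q_s$ drops out and only the summation indices $i\ge p$ survive, so
\[
Q_{(-p,s)}=2\sum_{i=p}^{s}(-1)^iq_{i-p}q_{s-i}=2(-1)^p\sum_{j=0}^{s-p}(-1)^jq_jq_{s-p-j}.
\]
The inner sum equals $\delta_{s-p,0}$: this is the standard identity obtained by comparing coefficients of $z^{s-p}$ in $\kappa_z(A)\kappa_{-z}(A)=1$, which holds because $\kappa_{-z}(A)=\kappa_z(A)^{-1}$ straight from the definition of $\kappa_z$. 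Hence $Q_{(-p,s)}=2(-1)^p$ when $s=p$, and $Q_{(-p,s)}=0$ otherwise.

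Next I would write $\nu:=(-p)\lambda=(-p,\lambda_1,\dots,\lambda_n)$, adjoin a final $0$ when $n$ is even, and expand the resulting even-sized Pfaffian $Q_{(-p)\lambda}=\Pf M(\nu)$ along its first row, $Q_{(-p)\lambda}=\sum_{j\ge2}(-1)^jM(\nu)_{1j}\Pf\bigl(M(\nu)^{\widehat1\,\widehat j}\bigr)$, where $M(\nu)^{\widehat1\,\widehat j}$ is $M(\nu)$ with rows and columns $1$ and $j$ deleted. For $j\ge2$ the entry $M(\nu)_{1j}$ is $Q_{(-p,\nu_j)}$ with $\nu_j\in\{\lambda_1,\dots,\lambda_n,0\}$, and by the computation above it vanishes unless $\nu_j=p$; since $p>0$ the adjoined $0$ never contributes, and strictness of $\lambda$ forces at most one index $i$ with $\lambda_i=p$, corresponding to $j=i+1$. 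If no part of $\lambda$ equals $p$ then every summand vanishes and $Q_{(-p)\lambda}=0$. If $\lambda_i=p$ then only $j=i+1$ survives, $M(\nu)_{1,i+1}=Q_{(-p,p)}=2(-1)^p$, and deleting the rows and columns indexed by $-p$ and by $\lambda_i$ leaves exactly $M(\lambda\setminus\{\lambda_i\})$ (the adjoined $0$ remaining precisely when the parity of $n-1$ demands it), whose Pfaffian is $Q_{\lambda\setminus\{\lambda_i\}}$. Collecting the signs gives $Q_{(-p)\lambda}=(-1)^{i+1}\cdot2(-1)^p\cdot Q_{\lambda\setminus\{\lambda_i\}}=(-1)^{p+i+1}2\,Q_{\lambda\setminus\{\lambda_i\}}$.

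Most of this is bookkeeping, and the two steps I expect to require care are fixing the sign convention in the Pfaffian row expansion — which I would pin down by testing the $2\times2$ and $4\times4$ cases so that the exponent comes out $p+i+1$ and not $p+i$ — and tracking the adjoined-zero convention through the deletion, checking in both the $n$ even and $n$ odd cases that the size of the minor $M(\nu)^{\widehat1\,\widehat{i+1}}$ matches the matrix $M(\lambda\setminus\{\lambda_i\})$ attached to the $(n-1)$-part strict partition $\lambda\setminus\{\lambda_i\}$.
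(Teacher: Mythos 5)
Your argument is correct, but note that this paper does not actually prove Proposition \ref{Q_(-p)lambda}: it is recalled, without proof, from the earlier paper \cite{graf-jing:2024}, so there is no in-paper argument to compare yours against. Your derivation is a sound, self-contained route from the Pfaffian definition: the key computation $Q_{(-p,s)}=2(-1)^p\,\delta_{s,p}$ follows exactly as you say from \eqref{Q_(r,s)} together with the relation $\sum_{j=0}^{m}(-1)^jq_jq_{m-j}=\delta_{m,0}$ (coefficient extraction in $\kappa_z\kappa_{-z}=1$), and it is consistent with Proposition \ref{B_iQ_lambda} (the case $\lambda_i+\lambda_{i+1}=0$, $\lambda_i>0$). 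The first-row Pfaffian expansion $\Pf M=\sum_{j\geq2}(-1)^jM_{1j}\Pf\bigl(M^{\widehat{1}\,\widehat{j}}\bigr)$ then isolates the unique column with $\nu_j=p$ (strictness gives uniqueness, and $p>0$ rules out the adjoined zero), producing the sign $(-1)^{i+1}\cdot(-1)^p$, which matches the claim; a quick check with $\lambda=(p)$ and $\lambda=(a,p)$ confirms the convention. Your parity bookkeeping is also right: whether $n$ is odd or even, deleting the rows and columns indexed by $-p$ and $\lambda_i$ leaves precisely $M(\lambda\setminus\{\lambda_i\})$, with the zero part adjoined exactly when $n-1$ is odd, so the remaining Pfaffian is $Q_{\lambda\setminus\{\lambda_i\}}$ as defined in \eqref{Q_lambda/mu formula} and the preceding definition. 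In short: correct, and an acceptable direct proof of the recalled identity.
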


    If $\lambda$ is not a partition, then the following result states that $Q_\lambda=kQ_\tau$ for some constant $k$ and partition $\tau$.

    \begin{proposition}\label{B_iQ_lambda}
        Let $\lambda\in\Z^n$ be a composition, and let $B_i$ act on $\lambda$ by swapping its $i$th and $(i+1)$st parts. Then
        \[Q_{B_i\lambda}=
        \begin{cases}
            Q_\lambda&\text{if }\lambda_i=\lambda_{i+1},\\
            -Q_\lambda&\text{if }\lambda_i+\lambda_{i+1}\neq0,\\
            (-1)^{\lambda_i}2Q_{\lambda\setminus\{\lambda_i,\lambda_{i+1}\}}&\text{if }\lambda_i+\lambda_{i+1}=0\text{ and }\lambda_i>0,\\
            0&\text{if }\lambda_i+\lambda_{i+1}=0\text{ and }\lambda_i<0.
        \end{cases}\]
    \end{proposition}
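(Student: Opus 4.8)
The plan is to run the argument off the defining Pfaffian formula for $Q_{\lambda}$ together with a single length-two identity, namely
\[
Q_{(r,s)}+Q_{(s,r)}=2(-1)^{r}\,\delta_{r+s,0}\qquad(r,s\in\Z).
\]
To prove this I would multiply \eqref{Q_(r,s)} by $x^{r}y^{s}$ and sum over all $r,s$: after the substitution $a=r+i$, $b=s-i$ the double sum factors and one gets $\sum_{r,s}Q_{(r,s)}x^{r}y^{s}=\frac{x-y}{x+y}\,\kappa_{x}(A)\kappa_{y}(A)$, read off in the region $|x|>|y|$. Adding the same expression with $x$ and $y$ interchanged, the rational prefactors combine, and using $\kappa_{x}(A)\kappa_{-x}(A)=1$ this collapses to $2\sum_{n\in\Z}(-1)^{n}x^{-n}y^{n}$; comparing coefficients of $x^{r}y^{s}$ gives the identity. (Equivalently this is the Clifford relation $Y_{r}Y_{s}+Y_{s}Y_{r}=2(-1)^{r}\delta_{r+s,0}$ for the twisted vertex modes with $Q_{(\lambda_{1},\dots,\lambda_{n})}=Y_{\lambda_{1}}\cdots Y_{\lambda_{n}}\!\cdot\!1$ from \cite{graf-jing:2024}, and the rest of the argument can be run in that language as well.)

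Now fix $\lambda$ and $i$, and let $P$ be the permutation matrix of the transposition $(i,i+1)$, extended to fix the padding index when $n$ is odd. If $\lambda_{i}=\lambda_{i+1}$ then $B_{i}\lambda=\lambda$ and there is nothing to prove. If $\lambda_{i}+\lambda_{i+1}\neq0$, an entrywise check shows that $M(B_{i}\lambda)$ and $P^{t}M(\lambda)P$ automatically agree off the block on rows and columns $i,i+1$, and agree on that block precisely because $Q_{(\lambda_{i},\lambda_{i+1})}+Q_{(\lambda_{i+1},\lambda_{i})}=0$; hence $M(B_{i}\lambda)=P^{t}M(\lambda)P$, so $Q_{B_{i}\lambda}=\Pf M(B_{i}\lambda)=\det(P)\,\Pf M(\lambda)=-Q_{\lambda}$. (If moreover $\lambda_{i}=\lambda_{i+1}\neq0$ this gives $Q_{\lambda}=-Q_{\lambda}$, i.e.\ $Q_{\lambda}=0$, consistently with the first case.)

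It remains to treat $\lambda_{i}+\lambda_{i+1}=0$, say $\lambda_{i}=a\neq0$ and $\lambda_{i+1}=-a$. Now $M(B_{i}\lambda)$ and $P^{t}M(\lambda)P$ differ only in the $(i,i+1)$ and $(i+1,i)$ entries, the former by $Q_{(a,-a)}+Q_{(-a,a)}=2(-1)^{a}$; expanding the Pfaffian along the adjacent pair of indices $i,i+1$ (whose pivot sign is $+1$) turns this rank-one change of the pivot block into the correction $Q_{B_{i}\lambda}=-Q_{\lambda}+2(-1)^{a}\,Q_{\lambda\setminus\{\lambda_{i},\lambda_{i+1}\}}$, using that deleting rows/columns $i,i+1$ from $M(\lambda)$ gives $M(\lambda\setminus\{\lambda_{i},\lambda_{i+1}\})$. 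One then has to evaluate $Q_{\lambda}$ itself: the $2\times2$ pivot block of $M(\lambda)$ is singular because $Q_{(a,-a)}=0$ for $a>0$ (an empty sum in \eqref{Q_(r,s)}) and $Q_{(-a,a)}=0$ for $a<0$, and running the Pfaffian expansion through that singular block shows $Q_{\lambda}$ equals $0$ or $2(-1)^{a}\,Q_{\lambda\setminus\{\lambda_{i},\lambda_{i+1}\}}$ according to the sign of $a$; substituting this back yields the last two cases. The step I expect to be the real obstacle is exactly this evaluation of $Q_{\lambda}$: with the pivot block singular the convenient ``conjugate by a transposition'' move is unavailable, so one must expand the Pfaffian honestly and verify that the matchings surviving through rows $i,i+1$ — which pair those indices to the rest via the off-block entries $q_{\lambda_{k}\mp a}$ — recombine into the claimed clean multiple of $Q_{\lambda\setminus\{\lambda_{i},\lambda_{i+1}\}}$, with the sign bookkeeping deciding which of $a>0$, $a<0$ produces $0$ and which produces $2(-1)^{a}Q_{\lambda\setminus\{\lambda_{i},\lambda_{i+1}\}}$. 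As a numerical consistency check I would specialize to $\lambda=(-p)\mu$ with $\mu$ a strict partition and confirm agreement with Proposition \ref{Q_(-p)lambda}.
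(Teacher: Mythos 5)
The paper itself gives no proof of Proposition \ref{B_iQ_lambda} --- it is recalled from \cite{graf-jing:2024} --- so your argument has to stand on its own. Most of it does: the identity $Q_{(r,s)}+Q_{(s,r)}=2(-1)^{r}\delta_{r+s,0}$ and its generating-function proof are correct, the conjugation $M(B_i\lambda)=P^{t}M(\lambda)P$ settles the first two cases, and tracking the single changed entry through the Pfaffian expansion along the adjacent pair $(i,i+1)$ correctly yields the universally valid relation
\begin{equation*}
Q_{B_i\lambda}=-Q_\lambda+2(-1)^{\lambda_i}\,\delta_{\lambda_i+\lambda_{i+1},0}\,Q_{\lambda\setminus\{\lambda_i,\lambda_{i+1}\}},
\end{equation*}
which is exactly the Pfaffian incarnation of the Clifford relation you mention in passing.

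The genuine problem is the step you yourself flag as the obstacle: evaluating $Q_\lambda$ in the sum-zero case as $0$ when $\lambda_i>0$ and as $2(-1)^{\lambda_i}Q_{\lambda\setminus\{\lambda_i,\lambda_{i+1}\}}$ when $\lambda_i<0$. This is not merely unproven --- it is false in the stated generality, because the value depends on the parts to the right of position $i+1$. Take $\lambda=(1,-1,1)$, $i=1$. From \eqref{Q_(r,s)} one gets $Q_{(1,-1)}=Q_{(-1,0)}=0$, $Q_{(1,0)}=q_1$, $Q_{(-1,1)}=-2$, so padding to $(1,-1,1,0)$ gives $Q_{(1,-1,1)}=\Pf M=Q_{(1,0)}Q_{(-1,1)}=-2q_1\neq0$ despite $\lambda_1=1>0$, while $Q_{(-1,1,1)}=Q_{(-1,1)}q_1-Q_{(-1,1)}q_1+0\cdot Q_{(1,1)}=0$, not $-2Q_{(1)}$ as the third case would predict; the two values do satisfy the displayed relation, and for $n=2$ the stated cases hold, which is why small checks do not expose this. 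So your relation is the correct general statement, and the last two cases need an additional hypothesis on the tail (for instance $i+1=n$, or that $|\lambda_i|$ does not occur among $\lambda_{i+2},\ldots,\lambda_n$), under which your plan closes immediately by anticommuting the negative mode (or expanding the Pfaffian) down to the vacuum; the precise hypotheses should be taken from \cite{graf-jing:2024}. Note also that in this paper the proposition is only invoked to straighten $Q_{p\mu}$ with $\mu$ a partition and $p\geq0$, where only the first two cases --- the ones you do prove --- are ever used.
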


    Consequently, we have that the vertex operator $\kappa_z\cdot \kappa_{-1/z}^\perp$ acts on $Q_\lambda$ in the following way.
    
    \begin{theorem}\label{vertex operator identity}
        Let $\lambda$ be a partition, then we have
        \[\kappa_z\cdot \kappa_{-1/z}^\perp Q_\lambda=\sum_{p\in\Z}Q_{p\lambda}z^p.\]
    \end{theorem}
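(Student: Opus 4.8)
The plan is to compare, for each $p\in\Z$, the coefficient of $z^p$ on the two sides, using the convention $q_j=0$ for $j<0$ so that every sum below is finite and negative $p$ needs no special treatment. For the left-hand side, $\kappa_{-1/z}^\perp=\sum_{k\ge0}(-1)^kz^{-k}q_k^\perp$, so Proposition~\ref{adjoint} gives $\kappa_{-1/z}^\perp Q_\lambda=Q_\lambda+2\sum_{k\ge1}(-1)^kz^{-k}Q_{\lambda/(k)}$; multiplying this by $\kappa_z=\sum_{n\ge0}q_nz^n$ (multiplication in $\Gamma$) and extracting the $z^p$-coefficient yields
\[
[z^p]\,\kappa_z\kappa_{-1/z}^\perp Q_\lambda \;=\; q_pQ_\lambda+2\sum_{k\ge1}(-1)^kq_{p+k}Q_{\lambda/(k)}.
\]
It then remains to show this expression equals $Q_{p\lambda}$. (One could instead observe, via the Sum Rule (Proposition~\ref{sum rule}) and Proposition~\ref{Q_lambda(z)}, that $\kappa_{-1/z}^\perp Q_\lambda=Q_\lambda(A-1/z)$, but that identification is not needed here.)

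For $Q_{p\lambda}$ I would carry out a cofactor (Laplace) expansion of its defining Pfaffian along the row and column indexed by the part $p$. Removing that row/column together with the one indexed by a part $\lambda_k$ leaves the matrix computing $Q_{\lambda\setminus\{\lambda_k\}}$, and---when a trailing $0$ has been adjoined so that the matrix has even size---removing it together with that $0$-row/column leaves $M(\lambda)$. This produces
\[
Q_{p\lambda}=\sum_{k}(-1)^{k+1}Q_{(p,\lambda_k)}Q_{\lambda\setminus\{\lambda_k\}}+\epsilon_\lambda\,q_pQ_\lambda,
\]
where $\epsilon_\lambda\in\{0,1\}$ is $1$ exactly when such a $0$ was adjoined. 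Inserting the definition~(\ref{Q_(r,s)}) of $Q_{(p,\lambda_k)}$ and regrouping by the monomial $q_{p+i}$ turns the sum into $q_pS_0+2\sum_{i\ge1}(-1)^iq_{p+i}S_i$, where $S_i:=\sum_{k}(-1)^{k+1}q_{\lambda_k-i}\,Q_{\lambda\setminus\{\lambda_k\}}$.

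Next I would identify the auxiliary sums $S_i$. For $i\ge1$, $S_i$ is exactly the cofactor expansion, along the last column $(q_{\lambda_1-i},\dots,q_{\lambda_n-i},\dots)^{t}$, of the skew Pfaffian defining $Q_{\lambda/(i)}$ in~(\ref{Q_lambda/mu formula}), so $S_i=Q_{\lambda/(i)}$. For $S_0$: sliding the leading $0$ of $0\lambda$ rightward past the (positive) parts of $\lambda$ via Proposition~\ref{B_iQ_lambda} gives $Q_{0\lambda}=(-1)^{\ell(\lambda)}Q_\lambda=(2\epsilon_\lambda-1)Q_\lambda$, while specializing the cofactor expansion above to $p=0$ and using $Q_{(0,\lambda_k)}=-q_{\lambda_k}$ (again Proposition~\ref{B_iQ_lambda}) gives $Q_{0\lambda}=\epsilon_\lambda Q_\lambda-S_0$; comparing forces $S_0=(1-\epsilon_\lambda)Q_\lambda$. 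Substituting these back, the $\epsilon_\lambda$-terms combine to $q_pQ_\lambda$ and one is left with $Q_{p\lambda}=q_pQ_\lambda+2\sum_{i\ge1}(-1)^iq_{p+i}Q_{\lambda/(i)}$, which is precisely the $z^p$-coefficient found for the left-hand side. Summing against $z^p$ over all $p\in\Z$ then gives the theorem.

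The step I expect to be the genuine obstacle is the parity and trailing-zero bookkeeping: verifying that $Q_\lambda$, $Q_{\lambda\setminus\{\lambda_k\}}$ and $Q_{p\lambda}$ are all unchanged by an adjoined zero, pinning down the value of $\epsilon_\lambda$ and the signs $(-1)^{k+1}$ in the two Pfaffian expansions (notably the one taken along the bottom row, which produces the $S_i$), and confirming that these pieces fit together so that the $\epsilon_\lambda$-contributions cancel. The surrounding generating-function manipulation is routine.
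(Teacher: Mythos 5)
Two remarks before the substance: this paper does not actually prove Theorem \ref{vertex operator identity} --- it is recalled from \cite{graf-jing:2024} --- so there is no in-paper argument to compare yours against, and your proposal has to stand as a self-contained proof. As such, its skeleton is correct. The coefficient extraction on the left-hand side via Proposition \ref{adjoint} is right, and the theorem is indeed equivalent to the expansion
\[
Q_{p\lambda}\;=\;q_p\,Q_\lambda+2\sum_{k\geq1}(-1)^{k}q_{p+k}\,Q_{\lambda/(k)}\qquad(p\in\Z),
\]
which your Pfaffian expansions do deliver: in the first-row expansion of $M(p\lambda)$ (resp.\ $M(p\lambda\,0)$) the signs are $(-1)^{k+1}$, the complementary Pfaffians are \emph{literally} the defining Pfaffians of $Q_{\lambda\setminus\{\lambda_k\}}$ in both parity cases (so no zero-invariance is needed at that step), the adjoined-zero column contributes $q_pQ_\lambda$ exactly when $\ell(\lambda)$ is even, and $S_i=Q_{\lambda/(i)}$ for $i\geq1$ is precisely the last-column expansion of \eqref{Q_lambda/mu formula}, the stray entry $q_{-i}$ appearing when $\ell(\lambda)$ is even being zero.

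The one genuine gap is the evaluation of $S_0$ when $n=\ell(\lambda)$ is even, and it is more than bookkeeping. Sliding the zero with Proposition \ref{B_iQ_lambda} only gives $Q_{0\lambda}=(-1)^{n}Q_{\lambda0}$; the further equality $Q_{\lambda0}=Q_\lambda$ is the definition when $n$ is odd, but for $n$ even it asserts $\Pf M(\lambda00)=\Pf M(\lambda)$, and expanding $\Pf M(\lambda00)$ along its last column shows that this assertion is \emph{equivalent} to the statement $S_0=0$ you are trying to extract. So, as written, ``comparing forces $S_0=(1-\epsilon_\lambda)Q_\lambda$'' is circular in exactly the case that matters. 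The repair is easy and should be made explicit: subtract row and column $n+1$ of $M(\lambda00)$ from row and column $n+2$ (a determinant-one congruence, which leaves the Pfaffian unchanged); the entries of the last row and column against the first $n$ indices then vanish, the $(n+1,n+2)$ entry is $Q_{(0,0)}=1$, and expanding along the last column gives $\Pf M(\lambda00)=(-1)^{n}\Pf M(\lambda)=\Pf M(\lambda)$. (Alternatively, quote the well-definedness of the extended Pfaffian definition under trailing zeros from \cite{graf-jing:2024}.) With that supplied, $S_0=(1-\epsilon_\lambda)Q_\lambda$, the $\epsilon_\lambda$-terms recombine as you say, and the proof closes; as a small simplification, $Q_{(0,s)}=-q_s$ follows directly from $\kappa_z\kappa_{-z}=1$ without invoking Proposition \ref{B_iQ_lambda}.
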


    Lastly, we have that a specific type of skew Schur's $Q$-function is equivalent to the multiplication of two Schur's $Q$-functions.
    
    \begin{proposition}\label{p>t+r}
        For all partitions $\lambda\in\Z^n$ and integers $k,p\in\Z$ such that $k\geq0$ and $p>\lambda_1+k$, we have
        \[Q_{p\lambda/(p-k)}=q_kQ_\lambda.\]
    \end{proposition}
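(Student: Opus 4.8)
The plan is to prove $Q_{p\lambda/(p-k)}=q_kQ_\lambda$ by expanding the skew Schur's $Q$-function via its Pfaffian definition \eqref{Q_lambda/mu formula} and exploiting the hypothesis $p>\lambda_1+k\geq k$, which forces $p-k>0$ and $p>\lambda_i$ for all $i$. Write $\mu=(p-k)$, a one-row partition, so $m=1$ and $n+1$ has the opposite parity to $n$; I would handle the two parities of $n$ separately, padding $p\lambda$ with a $0$ when needed exactly as in the definition. The matrix $M(p\lambda,(p-k))$ has the block form $\begin{pmatrix}M(p\lambda)&N(p\lambda,(p-k))\\-N(p\lambda,(p-k))^t&0\end{pmatrix}$, where the column $N(p\lambda,(p-k))$ has entries $q_{p-(p-k)}=q_k$ in the top position and $q_{\lambda_i-(p-k)}$ in the remaining positions. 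Since $p>\lambda_1+k$, we have $\lambda_i-(p-k)<0$ for every $i$, so $q_{\lambda_i-(p-k)}=0$; thus the only nonzero entry coming from $\mu$ is the single $q_k$ pairing $\mu$ with the first part $p$ of $p\lambda$.

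Next I would use Pfaffian row/column expansion along the row indexed by $\mu$: because that row has a unique nonzero entry (namely $\pm q_k$, pairing with the $p$-indexed row), the Pfaffian expansion collapses to $\pm q_k$ times the Pfaffian of the matrix obtained by deleting both the $\mu$ row/column and the $p$ row/column. Deleting the $p$ row/column from $M(p\lambda)$ leaves precisely $M(\lambda)$ (the remaining parts being $\lambda_1,\ldots,\lambda_n$, together with the padding $0$ if present), and the remaining off-diagonal block entries $q_{\lambda_i-(p-k)}$ are all zero as noted, so no cross terms survive. Hence $Q_{p\lambda/(p-k)}=\pm q_k\,\Pf M(\lambda)=\pm q_k Q_\lambda$, and a careful sign/parity bookkeeping in the expansion — tracking the position of the $\mu$-index and the $p$-index and the effect of the $0$-padding — shows the sign is $+1$.

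The main obstacle I anticipate is exactly this sign bookkeeping: the Pfaffian expansion formula $\Pf(M)=\sum_{j\neq i}(-1)^{i+j+1+[i>j]}M_{ij}\,\Pf(M_{\hat\imath\hat\jmath})$ introduces a sign depending on the relative order of the indices, and because the $\mu$-column sits after all the $p\lambda$-columns while the pivot row $p$ sits first (or second, after a padding $0$), one must verify the signs from the two deletions cancel. I would organize this by first treating the generic case $n$ even (so $p\lambda$ has odd length $n+1$, and we adjoin $0$, giving an even matrix together with $\mu$), then the case $n$ odd, and in each case reduce to a clean index count. An alternative, possibly cleaner route is to avoid Pfaffian expansion entirely: use the adjoint identity \eqref{e:skew}, namely $(Q_{p\lambda/(p-k)},F)=(Q_{p\lambda},2^{-1}Q_{(p-k)}F)$ for all $F\in\Gamma$, together with Proposition \ref{adjoint} giving $q_{p-k}^\perp Q_{p\lambda}=2Q_{p\lambda/(p-k)}$, and then compute $Q_{p\lambda/(p-k)}$ by applying Theorem \ref{vertex operator identity} in reverse — recognizing $Q_{p\lambda}$ as the coefficient of $z^p$ in $\kappa_z\kappa_{-1/z}^\perp Q_\lambda$ and extracting the skew by a one-part reduction. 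I would present whichever of these yields the shorter argument, but expect the Pfaffian-expansion approach with explicit sign tracking to be the one that generalizes the corresponding Schur-function identity most transparently.
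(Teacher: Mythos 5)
The paper itself gives no proof of Proposition \ref{p>t+r}: it is one of the identities recalled from \cite{graf-jing:2024}, so there is no internal proof to compare against, and I can only assess your argument, which is correct in substance. With $\mu=(p-k)$ the block $N(p\lambda,(p-k))$ is a single column whose top entry is $q_{p-(p-k)}=q_k$, and since $p-k>\lambda_1\geq 0$ every other entry $q_{\lambda_i-(p-k)}$, as well as the entry $q_{-(p-k)}$ coming from the padding part when it occurs, vanishes; expanding the Pfaffian along that column leaves $q_k$ times $\Pf M(\lambda)$ or $\Pf M(\lambda 0)$, which is $q_kQ_\lambda$ by definition, and in your expansion formula the unique surviving term (pairing the first index with the last, even-positioned, index) has sign $(-1)^{1+(n+2)+1}$ with $n+2$ even, i.e. $+1$, so the feared sign issue is a one-line check rather than a delicate computation. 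Two details of your bookkeeping should be corrected, though neither harms the argument. First, by \eqref{Q_lambda/mu formula} the padding $0$ is appended at the end of the outer composition, so the pivot row indexed by $p$ is always the first row, never the second. Second, the padding is governed by the parity of the total number of indices $(n+1)+1=n+2$: the $0$ is adjoined exactly when $n$ is odd, whereas your case split adjoins it when $n$ is even, which would produce an odd-sized matrix having no Pfaffian. With these fixed, your Pfaffian route is complete as described, and the alternative detour through \eqref{e:skew} and the vertex operator identity is unnecessary.
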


    \section{Stability Theorems}
    
    In this section, we will show that the stability theorems of plethysm of Schur functions \cite{carre-thibon:1992} also hold for Schur's $Q$-functions, except in one special case. First, in order to use the vertex operator identity, we will show how $\kappa_z^\perp$ and $\kappa_{-z}^\perp$ act on Schur's $Q$-functions.
    
    \begin{lemma}\label{F(A+z)}
        Let $F\in\Gamma\ox\C[z]$, then we have 
        \begin{align*}
            \kappa_z^\perp F(A)&=F(A+z),\\
            \kappa_{-z}^\perp F(A)&=F(A-z).
        \end{align*}
    \end{lemma}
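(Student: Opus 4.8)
The plan is to reduce the identity to the single generating-function statement $\kappa_z^\perp\,\kappa_w(A) = \kappa_w(A+z)$ and then bootstrap from there. First I would observe that $\kappa_z^\perp$ is, by the definition $F^\perp = \sum_n z^n F_n^\perp$, a ring-theoretic object built from the operators $q_n^\perp$, so it suffices to understand how $\kappa_z^\perp$ interacts with the generators $q_n(A)$ of $\Gamma$. Concretely, using the skew identity \eqref{e:skew} together with Proposition~\ref{adjoint} (which gives $q_r^\perp Q_\lambda = 2Q_{\lambda/(r)}$ for $r\ge 1$ and $q_0^\perp Q_\lambda = Q_\lambda$), one expands $\kappa_z^\perp Q_\lambda = \sum_{r\ge 0} z^r q_r^\perp Q_\lambda$ and recognizes the right-hand side, via the Sum Rule (Proposition~\ref{sum rule}) applied with $B=\{z\}$, as $\sum_\mu Q_{\lambda/\mu}(A)Q_\mu(z)$. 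The point of Proposition~\ref{Q_lambda(z)} is exactly that $Q_\mu(z)$ vanishes unless $\ell(\mu)\le 1$, and $Q_{(r)}(z) = q_r(z) = 2z^r$ for $r\ge 1$ while $Q_\emptyset(z)=1$; so the Sum Rule collapses to $Q_\lambda(A+z) = Q_{\lambda}(A) + \sum_{r\ge1} 2z^r Q_{\lambda/(r)}(A)$, which is precisely $\sum_{r\ge0} z^r q_r^\perp Q_\lambda = \kappa_z^\perp Q_\lambda$.

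This handles $F = Q_\lambda$ for a strict partition $\lambda$, and since the $Q_\lambda$ form a basis of $\Gamma$ and everything in sight is $\C[z]$-linear (and the statement is for $F\in\Gamma\ox\C[z]$, so we just extend coefficientwise in the $z$ coming from the tensor factor), the first identity $\kappa_z^\perp F(A) = F(A+z)$ follows for all $F$. The second identity $\kappa_{-z}^\perp F(A) = F(A-z)$ I would then get either by the same computation with the alphabet $\{-z\}$ in place of $\{z\}$ — using $q_n(-z) = 2(-z)^n$ from \eqref{q_r(-z)} — noting that $A-z$ in plethystic notation means the disjoint difference with $\kappa_w(A-z) = \kappa_w(A)\kappa_w(z)^{-1}$ rather than substitution of a negative variable, and checking that the Sum Rule argument goes through verbatim after replacing $z$ by $-z$ in the $Q_\mu$ evaluations. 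Alternatively, one can deduce it formally: $\kappa_{-z}^\perp$ is the formal inverse power series to $\kappa_z^\perp$ in an appropriate completion (since $\kappa_z\kappa_{-z}=1$ as generating series would suggest after the $\perp$ is applied), matching $F(A-z)$ being the "inverse" of $F\mapsto F(A+z)$; but the direct computation is cleaner and less prone to convergence hand-waving.

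The one genuinely delicate point — the step I expect to need the most care — is the bookkeeping that identifies $\kappa_z^\perp F = \sum_{r\ge 0} z^r q_r^\perp F$ with the operator whose action on $Q_\lambda$ we computed, because one must be careful that $\kappa_z = \sum_{n\in\Z} q_n z^n$ a priori has terms with negative $n$, yet $q_n = 0$ for $n<0$, so the sum is really over $n\ge 0$; correspondingly $\kappa_z^\perp = \sum_{n\ge 0} z^n q_n^\perp$ with no negative powers, matching the fact that $\kappa_z^\perp$ lowers degree. I would state this normalization explicitly at the start of the proof. A secondary subtlety is making sure the Sum Rule, which is stated for partitions $\lambda$, suffices: since $\kappa_z^\perp$ is defined as an operator on all of $\Gamma$ and $F(A+z)$ makes sense for all $F\in\Gamma$, and the $Q_\lambda$ with $\lambda$ strict already span $\Gamma$, restricting to strict partitions loses nothing. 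Everything else is a routine rearrangement of the two generating-function identities, so the proof is short once these normalizations are pinned down.
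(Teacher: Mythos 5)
Your proposal is correct and follows essentially the same route as the paper: expand $\kappa_z^\perp Q_\lambda=\sum_{r\ge0}z^rq_r^\perp Q_\lambda$, use Proposition~\ref{adjoint} together with $q_r(z)=2z^r$ and the vanishing of $Q_\mu(z)$ for $\ell(\mu)>1$ to recognize the result as $\sum_\mu Q_{\lambda/\mu}(A)Q_\mu(z)=Q_\lambda(A+z)$ via the Sum Rule, then extend by ($\C[z]$-)linearity and repeat with the alphabet $-z$ using \eqref{q_r(-z)}. Your extra normalization remarks (no negative powers in $\kappa_z^\perp$, strict partitions sufficing, and the plethystic meaning of $A-z$) are harmless refinements of the same argument.
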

    
    \begin{proof}
        We proceed similarly to \cite[p.~95]{macdonald:1995}. First, we compute the action of $\kappa_z^\perp$ on a basis element $Q_\lambda$. So, we have
        \begin{align*}
            \kappa_z^\perp Q_\lambda(A)&=\sum_{r\geq0}z^r \cdot q_r^\perp Q_\lambda(A)\\
            &=q_0^\perp Q_\lambda(A)q_0(z)+\sum_{r\geq1}q_r^\perp Q_\lambda(A)\cdot\frac{1}{2}q_r(z)
        \end{align*}
        since we see that $1=q_0(z)$ and $z^r=\frac{1}{2}q_r(z)$ ($r\geq1$) by \eqref{q_r(z)}. Then, we have that $q_0^\perp Q_\lambda=Q_\lambda$ and $\frac{1}{2}q_r^\perp Q_\lambda=Q_{\lambda/(r)}$ ($r\geq1$) by Proposition \ref{adjoint}, and so we get
        \[Q_{\lambda/(0)}(A)q_0(z)+\sum_{r\geq1}Q_{\lambda/(r)}(A)q_r(z)=\sum_{r\geq0}Q_{\lambda/(r)}(A)q_r(z).\]
        By Proposition \ref{plethysm zA}, we have that $Q_\mu(z)=0$ if $\ell(\mu)>1$, and so the sum is
        \[\sum_{\mu}Q_{\lambda/\mu}(A)Q_\mu(z).\]
        Finally, by the sum rule, we see that we have $Q_\lambda(A+z)$, as desired. The general case follows since the $Q_\lambda$ form a basis of $\Gamma$. The second identity can be proven similarly using \eqref{q_r(-z)}.
    \end{proof}

    We will write our sequences as the coefficients of Laurent series. We say that the \emph{degree} of a Laurent polynomial $L(Z)$ is the highest power of $Z$ that appears with a nonzero coefficient. The following identity will allow us to separate a particular product into a sum.

    \begin{lemma}\label{separate}
        Let $n\in\Z$ and let $H(Z)\in\C[Z,Z^{-1}]$ be a Laurent polynomial. Then
        \begin{equation*}
            \frac{H(Z)}{(1-Z)^n}=L(Z)+\sum_{i=1}^n\frac{c_i}{(1-Z)^i},
        \end{equation*}
        where $c_i\in\C$ for all $i$, and $L(Z)$ is a Laurent polynomial. If $H(Z)\neq0$, then $L(Z)$ has degree at most $\max(\deg(H)-n,0)$.
    \end{lemma}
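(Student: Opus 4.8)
The plan is to do the standard partial-fractions decomposition with respect to the variable $Z$, treating the factor $(1-Z)^n$ in the denominator as the only pole. First I would write $H(Z) = \sum_{k=a}^{b} h_k Z^k$ with $h_a, h_b \neq 0$ (the case $H=0$ being trivial, with all $c_i = 0$ and $L = 0$). The key observation is that division of Laurent polynomials by $(1-Z)$ behaves well: for any Laurent polynomial $G(Z)$, we may write $G(Z) = (1-Z)\widetilde{G}(Z) + G(1)$ where $\widetilde{G}(Z)$ is again a Laurent polynomial and $\deg \widetilde{G} \le \max(\deg G - 1, \text{[stuff from the low end]})$. Concretely, since $1 - Z^m = (1-Z)(1 + Z + \cdots + Z^{m-1})$ and similarly $Z^{-j} - 1 = (1-Z)(Z^{-j} + \cdots + Z^{-1})$, one can subtract off $G(1)$ and factor out $(1-Z)$ term by term.

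Next I would iterate this. Apply the division to $H$: $H(Z) = (1-Z) H_1(Z) + c$, where $c = H(1)$. Then
\[
\frac{H(Z)}{(1-Z)^n} = \frac{H_1(Z)}{(1-Z)^{n-1}} + \frac{c}{(1-Z)^n}.
\]
Repeat on $H_1$ to peel off $c_{n-1}/(1-Z)^{n-1}$, and so on, until after $n$ steps the remaining term is $H_n(Z)/(1-Z)^0 = H_n(Z) =: L(Z)$, a genuine Laurent polynomial. This produces exactly the claimed form with $c_i = H_{n-i}(1)$ (up to reindexing) and $L = H_n$.

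The one thing that needs care — and the main obstacle — is the degree bound on $L(Z)$. Each division step lowers the top degree by at least one: if $\deg H_j = d$ and $d \ge 1$, then after writing $H_j(Z) = (1-Z)H_{j+1}(Z) + H_j(1)$ the leading term $h_d Z^d$ must come from $-Z \cdot (\text{leading term of } H_{j+1})$, so $\deg H_{j+1} = d-1$; whereas if $\deg H_j \le 0$ already, the quotient $H_{j+1}$ has degree $\le 0$ as well (here one checks that dividing a polynomial supported in nonpositive degrees by $(1-Z)$ keeps it supported in nonpositive degrees, using the $Z^{-j}-1$ factorization above, so no new positive powers are created). Hence after $n$ divisions $\deg L = \deg H_n \le \max(\deg H - n, 0)$, which is the assertion. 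I would also remark that the low-degree (most negative power) behavior is controlled analogously but is not needed for the statement, so I would not belabor it. Finally, I would note uniqueness is not claimed and not needed, so the existence argument above suffices.
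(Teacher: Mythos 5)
Your argument is correct, and it takes a genuinely different route from the paper's. You iterate division with remainder by $(1-Z)$ in $\C[Z,Z^{-1}]$: since $G-G(1)$ vanishes at $Z=1$ it is divisible by $1-Z$ there, so $H=(1-Z)H_1+H(1)$, and peeling off one power of the pole at a time yields $c_n=H(1)$, $c_{n-1}=H_1(1)$, and so on, with $L=H_n$ after $n$ steps; your degree bookkeeping (exact drop by one while the degree is at least $1$, preservation of nonpositive support under division via the factorization of $Z^{-j}-1$) is precisely what is needed for $\deg L\le\max(\deg H-n,0)$, and it checks out. The paper instead runs an induction on $n$ whose base case $n=1$ expands each monomial $b_kZ^k/(1-Z)$ as a geometric series and inducts on the number of monomials of $H$, and whose inductive step multiplies the decomposition for $n-1$ by $1/(1-Z)$ and invokes the base case once more. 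Your route avoids the geometric series and the sub-induction on the number of terms, and it identifies the constants $c_i$ concretely as evaluations at $Z=1$ of the successive quotients, at the cost of having to justify the division-with-remainder step in the Laurent ring (which you do, term by term). The only point you leave implicit is the trivial case $n\le 0$ permitted by the statement ($n\in\Z$): there the sum is empty, $L=H(Z)(1-Z)^{-n}$ is already a Laurent polynomial, and the bound holds; the paper dispatches this in one sentence, and you should add the corresponding remark.
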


    \begin{proof}
        When $H(Z)=0$ or $n=0$, there is nothing to prove. When $n<0$ the sum is empty, and we see that $\deg(L)=\deg(H)-n$. Now, assume $n\geq1$. We proceed by induction on $n$.

        First, suppose $n=1$. We can write
        \[H(Z)=\sum_{i\in I} b_iZ^i,\]
        where $i$ ranges over some nonempty, finite set of integers $I\subset\Z$, and each $b_i\neq0$. First assume that $\# I=1$, and so $H(Z)=b_kZ^k$. In this case we have
        \begin{align*}
            \frac{H(Z)}{1-Z}&=b_kZ^k\sum_{p\geq0}Z^p\\
            &=b_k\sum_{p\geq k}Z^p\\
            &=L(Z)+\frac{b_k}{1-Z},
        \end{align*}
        where
        \begin{equation*}
            L(Z)=
            \begin{cases}
                -b_k\sum_{p=0}^{k-1}Z^p&\text{if }k>0,\\
                0&\text{if }k=0,\\
                b_k\sum_{p=k}^{-1}Z^p&\text{if }k<0.
            \end{cases}
        \end{equation*}
        Additionally, we see that $\deg(L)\leq \max(k-1,0)$. Next, suppose $\#I>1$, and assume that $\max(I)=k$. Let $G(Z)=H(Z)-b_kZ^k$, then by induction on $\#I$ we get
        \begin{align*}
            \frac{H(Z)}{1-Z}&=\frac{G(Z)}{1-Z}+\frac{b_kZ^k}{1-Z}\\
            &=\left(L_1(Z)+\frac{c}{1-Z}\right)+\left(L_2(Z)+\frac{b_k}{1-Z}\right)\\
            &=L(Z)+\frac{(c+b_k)}{1-Z},
        \end{align*}
        where $L(Z)=L_1(Z)+L_2(Z)$, and
        \[\deg(L)\leq\deg(L_1)+\deg(L_2)\leq \max(k-1,0).\]

        Now, suppose $n>1$. Then, by induction on $n$ we have
        \begin{align*}
            \frac{H(Z)}{(1-Z)^n}&=\frac{1}{1-Z}\cdot\frac{H(Z)}{(1-Z)^{n-1}}\\
            &=\frac{1}{1-Z}\left(L(Z)+\sum_{i=1}^{n-1}\frac{c_i}{(1-Z)^i}\right)\\
            &=\frac{L(Z)}{1-Z}+\sum_{i=2}^{n}\frac{c_{i-1}}{(1-Z)^{i}}.
        \end{align*}
        We also have $L(Z)/(1-Z)=P(Z)+c_0/(1-Z)$, and so with reindexing the $c_i$'s we get
        \begin{align*}
            \frac{H(Z)}{(1-Z)^n}&=P(Z)+\sum_{i=1}^n\frac{c_i}{(1-Z)^i}.
        \end{align*}
        Finally, we see that
        \[\deg(P)\leq \max(\deg(L)-1,0)=\max(\deg(H)-(n-1)-1,0).\]
    \end{proof}


    
    

    Now, we use the vertex algebraic method to prove 
    stability results for Schur's $Q$-functions. First, we prove a Schur's $Q$-function analogue of \cite[Theorem~4.1]{carre-thibon:1992}.

    \begin{theorem}\label{stability thm 1}
        Let $\lambda,\mu,\nu$ be partitions, and let $r$ be the greatest integer such that $|\lambda|(|\mu|+r)\leq|\nu|$, then
        \[\sum_{p,s\in\Z}\left(Q_\lambda\of Q_{p\mu},Q_{s\nu}\right)z^s=L(z)+\frac{cz^k}{1-z^{|\lambda|}},\]
        where $c,k\in\Z$, and $L(z)$ is a Laurent polynomial of degree at most $|\lambda|(\mu_1+r)$.
    \end{theorem}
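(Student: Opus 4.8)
The plan is to rewrite the double generating function $\sum_{p,s}(Q_\lambda\circ Q_{p\mu},Q_{s\nu})z^s$ as a single application of the vertex operator $\kappa_z\cdot\kappa_{-1/z}^\perp$ to a fixed symmetric function, then use Lemma~\ref{separate} to extract the rational part. First I would use Theorem~\ref{vertex operator identity} in the ``inner'' variable: since $\sum_{p}Q_{p\mu}(B)z_0^p=\kappa_{z_0}(B)\cdot\kappa_{-1/z_0}^\perp Q_\mu(B)$, I get a generating series for the $Q_{p\mu}$ in an auxiliary variable $z_0$. Plethystically substituting $B\mapsto A$ and applying $Q_\lambda(\cdot)$, and then pairing against $\sum_s Q_{s\nu}z^s = \kappa_z\cdot\kappa_{-1/z}^\perp Q_\nu$ via the inner product, should collapse everything; the key computational engine is Lemma~\ref{F(A+z)}, which turns $\kappa_z^\perp$ and $\kappa_{-z}^\perp$ into plethystic shifts $A\mapsto A\pm z$, together with Proposition~\ref{plethysm zA} ($Q_\lambda(zA)=z^{|\lambda|}Q_\lambda(A)$) to track degrees. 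The homogeneity of $Q_\lambda\circ Q_{p\mu}$ forces the pairing with $Q_{s\nu}$ to vanish unless $|\lambda|(|\mu|+p)=|\nu|+s|\nu|/|\nu|$, i.e.\ $s=|\lambda|(|\mu|+p)-|\nu|$, so the sum over $s$ is really controlled by the single sum over $p$, and each power $z^s$ carries $z^{|\lambda|p}$ up to the fixed shift $z^{|\lambda||\mu|-|\nu|}$.

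Next I would show that, after this reduction, the coefficient of $z^{|\lambda|p}$ stabilizes to a constant $c$ for $p$ large, with the ``large enough'' threshold governed exactly by $r$, the greatest integer with $|\lambda|(|\mu|+r)\le|\nu|$. The mechanism is that for $p$ large the plethysm $Q_\lambda\circ Q_{p\mu}$ has a stable expansion (its pairing against $Q_{s\nu}$ ceases to depend on $p$ once the ``extra box'' $p$ is larger than everything else it could interact with), and this is precisely the $Q$-analogue of \cite[Theorem~4.1]{carre-thibon:1992}. Concretely: after applying the vertex operators and Lemma~\ref{F(A+z)} the whole expression will take the shape $H(z)/(1-z^{|\lambda|})$ for an explicit Laurent polynomial $H$, because summing the geometric-type series produced by $\kappa_{z_0}(A)=\prod(1+z_0 a)/(1-z_0 a)$ contributes a single factor $(1-z^{|\lambda|})^{-1}$ (the variable $z_0$ gets specialized to a power of $z$ so that the pole is at $z^{|\lambda|}=1$). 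Then Lemma~\ref{separate} with $n=1$ gives $H(z)/(1-z^{|\lambda|}) = L(z) + c/(1-z^{|\lambda|})$ up to a monomial shift $z^k$, and the degree bound in Lemma~\ref{separate} yields $\deg L \le \max(\deg H - |\lambda|, 0)$; computing $\deg H$ in terms of $\mu_1$ and $r$ should give exactly the claimed bound $|\lambda|(\mu_1+r)$.

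The main obstacle I expect is the bookkeeping that turns the composed vertex operators into the clean form $H(z)/(1-z^{|\lambda|})$ with $H$ a genuine Laurent \emph{polynomial} (as opposed to a power series), and in particular pinning down $\deg H$ precisely enough to get the stated degree of $L$. The subtle point is that $Q_\lambda\circ Q_{p\mu}$ involves the plethysm of the whole $Q_\lambda$, not just a $q_n$, so I must expand $Q_\lambda$ as a polynomial $\mathcal F(q_1,q_2,\dots)$ and control $\mathcal F(q_1(Q_{p\mu}(A)),\dots)$; the plethystic identities $\kappa_{z_0}(kA)=\kappa_{z_0}(A)^k$ and $\kappa_{z_0}(A\pm B)=\kappa_{z_0}(A)\kappa_{z_0}(B)^{\pm1}$ let me pass $Q_\lambda(\cdot)$ through the sum $\sum_p Q_{p\mu}z_0^p$ only after I understand how $Q_\lambda$ acts on a generating series, and I anticipate needing Proposition~\ref{p>t+r} (the identity $Q_{p\lambda/(p-k)}=q_kQ_\lambda$ for $p$ large) as the technical heart that makes the $p$-dependence eventually constant. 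Once the expression is in the form $H(z)/(1-z^{|\lambda|})$, the rest is a direct invocation of Lemma~\ref{separate} and a degree count; establishing that form, and proving $\deg H \le |\lambda|(\mu_1+r)+|\lambda|$, is where the real work lies.
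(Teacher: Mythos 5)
Your toolkit is the right one (the vertex operator identity for $\sum_s Q_{s\nu}z^s$, Lemma \ref{F(A+z)}, Proposition \ref{p>t+r}, Lemma \ref{separate}, and the homogeneity count $s=|\lambda|(|\mu|+p)-|\nu|$), but the way you propose to assemble it has a genuine gap at the crucial step. You want to apply Theorem \ref{vertex operator identity} ``in the inner variable'' to produce $\sum_p Q_{p\mu}z_0^p=\kappa_{z_0}\kappa_{-1/z_0}^\perp Q_\mu$ and then apply $Q_\lambda(\cdot)$ to this series. Plethysm by $Q_\lambda$ is not linear in its argument, so $Q_\lambda$ of the generating series is \emph{not} the generating series of the plethysms: the coefficient of $z_0^p$ in $Q_\lambda\bigl(\kappa_{z_0}\kappa_{-1/z_0}^\perp Q_\mu\bigr)$ mixes all the $Q_{p'\mu}$ and is not $Q_\lambda\of Q_{p\mu}$. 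You half-acknowledge this (``only after I understand how $Q_\lambda$ acts on a generating series''), but that is exactly the point that needs an idea, and the idea you offer for where the pole comes from --- summing the ``geometric-type series'' in $\kappa_{z_0}(A)$ with $z_0$ specialized to a power of $z$ --- is not a mechanism that works: $\kappa_{z_0}(A)$ is a series in $z_0$ with symmetric-function coefficients, and no specialization of $z_0$ produces the factor $(1-z^{|\lambda|})^{-1}$.

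The paper's proof never touches the $p\mu$ side with a vertex operator. It applies Theorem \ref{vertex operator identity} only to $\sum_s Q_{s\nu}z^s$, moves $\kappa_z^\perp$ across the inner product, and uses Lemma \ref{F(A+z)} to get $\sum_p\bigl(Q_\lambda\of Q_{p\mu}(A+z),\,Q_\nu(A-1/z)\bigr)$. The $p$-sum is then split at $p=\mu_1+r$: for $p\leq\mu_1+r$ one gets a Laurent polynomial of degree at most $|\lambda|(\mu_1+r)$ (using Proposition \ref{B_iQ_lambda} to straighten $Q_{p\mu}$), and for $p>\mu_1+r$ the sum-rule expansion $Q_{p\mu}(A+z)=Q_{p\mu}+2\sum_i Q_{p\mu/(i)}z^i$ is truncated by the weight bound $|\nu|$ on $Q_\nu(A-1/z)$ to the terms $i\geq p-r$; Proposition \ref{p>t+r} then gives $Q_{p\mu/(p-j)}=q_jQ_\mu$, so the summand becomes $p$-independent except for the factor $z^{|\lambda|(p-r)}$ pulled out by homogeneity of $Q_\lambda$. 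The pole $(1-z^{|\lambda|})^{-1}$ arises from the geometric series $\sum_{p>\mu_1+r}z^{|\lambda|(p-r)}$, and only then does Lemma \ref{separate} (with the degree bound $\deg H\leq|\lambda|r$ against the prefactor $z^{|\lambda|(\mu_1+1)}$) deliver $L(z)+cz^k/(1-z^{|\lambda|})$ with $\deg L\leq|\lambda|(\mu_1+r)$. So your plan needs to be reorganized along these lines: drop the auxiliary-variable vertex operator on the $\mu$ side, and make the split of the $p$-sum, the weight truncation, and Proposition \ref{p>t+r} carry the stabilization.
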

    
    \begin{proof}
        First, we let
        \[f(z)=\sum_{p,s\in\Z}\left(Q_\lambda\of Q_{p\mu},Q_{s\nu}\right)z^s.\]
        By linearity of the inner product, we have
        \[f(z)=\sum_{p\in\Z}\left(Q_\lambda\of Q_{p\mu},\sum_{s\in\Z}z^sQ_{s\nu}\right).\]
        We apply Theorem \ref{vertex operator identity} to the RHS of the inner product to get
        \[f(z)=\sum_{p\in\Z}\left(Q_\lambda\of Q_{p\mu},\kappa_z\cdot \kappa_{-1/z}^\perp Q_\nu\right).\]
        Next, from the definition of adjoint we have
        \[f(z)=\sum_{p\in\Z}\left(\kappa_z^\perp Q_\lambda\of Q_{p\mu},\kappa_{-1/z}^\perp Q_\nu\right),\]
        and so by Lemma \ref{F(A+z)} we get
        \[f(z)=\sum_{p\in\Z}\left(Q_\lambda\of Q_{p\mu}(A+z),Q_\nu(A-1/z)\right).\]
        
        Now, we will write $f(z)=R(z)+T(z)$, where
        \[R(z)=\sum_{p\leq \mu_1+r}\left(Q_\lambda\of Q_{p\mu}(A+z),Q_\nu(A-1/z)\right),\]
        and
        \[T(z)=\sum_{p>\mu_1+r}\left(Q_\lambda\of Q_{p\mu}(A+z),Q_\nu(A-1/z)\right).\]
        It suffices to show that $R(z)$ is a Laurent polynomial, and that $T(z)$ stabilizes. 
        
        First, we expand the plethysm $Q_{p\mu}(A+z)$ using the sum rule and Proposition \ref{Q_lambda(z)} to get
        \begin{align*}
            Q_{p\mu}(A+z)&=\sum_\gamma Q_{p\mu/\gamma}(A)Q_\gamma(z)\\
            &=\sum_i Q_{p\mu/(i)}(A)q_i(z)\\
            &=Q_{p\mu}+2\sum_{i\geq1}Q_{p\mu/(i)}z^i.
        \end{align*}
        Now, notice from Proposition \ref{B_iQ_lambda} that if $p\leq \mu_1+r$, then we have $Q_{p\mu}=Q_{\tau}$ (up to a, possibly-zero, coefficient) for some partition $\tau$ with largest part at most $\mu_1+r$. Hence, in this case we have $Q_{p\mu}(A+z)=Q_{p\mu}+2\sum_{i=1}^{\mu_1+r}Q_{p\mu/(i)}z^i$. So, we see that 
        \[R(z)=\sum_{p\leq \mu_1+r}\left(Q_\lambda\left(Q_{p\mu}+2\sum_{i=1}^{\mu_1+r}Q_{p\mu/(i)}z^i\right),Q_\nu(A-1/z)\right)\]
        is a Laurent polynomial of degree at most $|\lambda|(\mu_1+r)$.

        Next, we similarly have
        \[T(z)=\sum_{p>\mu_1+r} \left(Q_\lambda\left(Q_{p\mu}+2\sum_{i=1}^{\mu_1+r}Q_{p\mu/(i)}z^i\right),Q_\nu(A-1/z)\right).\]
        Note that $Q_\nu(A-1/z)$ expands into a linear combination of basis elements with weights at most $|\nu|$. Hence, when computing the inner product in $T(z)$, we only need to consider the case where $|\lambda|\cdot|p\mu/(i)|\leq|\nu|$, which is to say that $|\lambda|(|\mu|+p-i)\leq|\nu|$. Since $r$ is the greatest integer such that $|\lambda|(|\mu|+r)\leq|\nu|$, we only need to consider when $p-i\leq r$, i.e., $i\geq p-r$. 
        
        Therefore, we have
        \begin{align*}
            T(z)&=\sum_{p>\mu_1+r}\left(Q_\lambda\left(Q_{p\mu}+2\sum_{i\geq1}Q_{p\mu/(i)}z^i\right),Q_\nu(A-1/z)\right)\\
            &=\sum_{p>\mu_1+r}\left(Q_\lambda\left(2\sum_{i=p-r}^pQ_{p\mu/(i)}z^i\right),Q_\nu(A-1/z)\right)\\
            &=\sum_{p>\mu_1+r}\left(Q_\lambda\left(2\sum_{j=0}^rQ_{p\mu/(p-j)}z^{p-j}\right),Q_\nu(A-1/z)\right)
        \end{align*}
        after reindexing with $j=p-i$. Note that $p-j\geq p-r>(\mu_1+r)-r=\mu_1$, and so by Proposition \ref{p>t+r} we have that $Q_{p\mu/(p-j)}=q_jQ_\mu$. Thus, we have
        \begin{align*}
            T(z)&=\sum_{p>\mu_1+r}\left(Q_\lambda\left(2\sum_{j=0}^rq_jQ_\mu z^{p-j}\right),Q_\nu(A-1/z)\right)\\
            &=\sum_{p>\mu_1+r}z^{|\lambda|(p-r)}\left(Q_\lambda\left(2\sum_{j=0}^rq_jQ_\mu z^{r-j}\right),Q_\nu(A-1/z)\right)\\
            &=\frac{z^{|\lambda|(\mu_1+1)}}{1-z^{|\lambda|}}\cdot H(z)
        \end{align*}
        where $H(z)$ is a Laurent polynomial of degree at most $|\lambda|r$ of the form $H(z)=z^kH_2(z^{|\lambda|})$, for some $k\in\Z$. By Lemma \ref{separate}, we then have
        \begin{align*}
            T(z)&=G(z)+\frac{cz^k}{1-z^{|\lambda|}},
        \end{align*}
        where $c\in\Z$, and $G(z)$ is a Laurent polynomial of degree at most $|\lambda|(\mu_1+1+r)-|\lambda|=|\lambda|(\mu_1+r)$. Finally, we have
        \begin{align*}
            f(z)&=R(z)+\left(G(z)+\frac{cz^k}{1-z^{|\lambda|}}\right)\\
            &=L(z)+\frac{cz^k}{1-z^{|\lambda|}},
        \end{align*}
        where $L(z)$ is a Laurent polynomial of degree at most $|\lambda|(\mu_1+r)$.
    \end{proof}

    Next, we have a Schur's $Q$-function analogue of the stability property \cite[Theorem~4.2]{carre-thibon:1992}.
    
    \begin{theorem}\label{stability thm 2}
        Let $\lambda,\mu,\nu$ be partitions, and let $g(z)=\sum_{p,s\in\Z}\left(Q_{p\lambda}\of Q_\mu,Q_{s\nu}\right)z^s$.
        \begin{enumerate}
            \item If $\ell(\mu)>1$, then $g(z)$ is a Laurent polynomial of degree at most $\frac{|\nu|\cdot\mu_1}{|\mu|-\mu_1}$.
            \item If $\mu=(m)$, then
                \begin{align*}
                    g(z)&=P(z)+\frac{c_1z^k}{1-z^m}+\frac{c_2z^k}{(1-z^m)^2},
                \end{align*}
                where $c_1,c_2,k\in\Z$ and $P(z)$ is a Laurent polynomial of degree at most $(\lambda_1+|\nu|)m$.
        \end{enumerate}
    \end{theorem}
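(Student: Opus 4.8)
The argument will parallel the proof of Theorem~\ref{stability thm 1}. Exactly as there, applying Theorem~\ref{vertex operator identity} to $\sum_{s}z^sQ_{s\nu}$, the definition of adjoint, and Lemma~\ref{F(A+z)}, I would first rewrite
\[
g(z)=\sum_{p\in\Z}\bigl((Q_{p\lambda}\of Q_\mu)(A+z),\,Q_\nu(A-1/z)\bigr),
\]
where, since the inner product is graded, for each $p$ only the original term with $s=(|\lambda|+p)|\mu|-|\nu|$ contributes. The feature that splits the two cases is the minimal $A$-degree of $Q_\mu(A+z)=\sum_{i\ge0}Q_{\mu/(i)}(A)q_i(z)$: by Proposition~\ref{Q_lambda(z)} this equals $|\mu|-\mu_1$, positive when $\ell(\mu)>1$ and zero when $\mu=(m)$. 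I would also record the elementary fact (proved by writing $\log\kappa_z$ as a power-sum series) that a homogeneous $F\in\Gamma$ of degree $d$, applied plethystically to an alphabet all of whose monomials have $A$-degree $\ge w_0$, produces an element all of whose monomials have $A$-degree $\ge dw_0$.

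For part~(1), apply this with $F=Q_{p\lambda}$ (degree $|\lambda|+p$) and $w_0=|\mu|-\mu_1>0$: every monomial of $(Q_{p\lambda}\of Q_\mu)(A+z)$ has $A$-degree $\ge(|\lambda|+p)(|\mu|-\mu_1)$, which exceeds $|\nu|$ once $p>\frac{|\nu|}{|\mu|-\mu_1}-|\lambda|$. Since $Q_\nu(A-1/z)$ is a combination of terms of $A$-degree $\le|\nu|$ and the pairing is graded, such $p$ contribute nothing; as $Q_{p\lambda}=0$ for $p$ sufficiently negative (Proposition~\ref{Q_(-p)lambda}), only finitely many $p$ survive, so $g(z)$ is a Laurent polynomial, of degree $\le\max_p s(p)=\frac{|\nu|\mu_1}{|\mu|-\mu_1}$.

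For part~(2) write $q_m(A+z)=2z^m+D$ with $D:=q_m(A+z)-2z^m$; this $D$ is a genuine alphabet (it has nonnegative coefficients) all of whose monomials have $A$-degree $\ge1$. By the sum rule, $Q_{p\lambda}(q_m(A+z))=\sum_\rho Q_{p\lambda/\rho}(D)\,Q_\rho(2z^m)$, and since $2z^m$ is a two-letter alphabet, $Q_\rho(2z^m)=0$ unless $\ell(\rho)\le2$; using $q_n(2z^m)=4nz^{nm}$ from \eqref{q_r(2z)} one checks, for such $\rho$, that $Q_\rho(2z^m)$ is $z^{m|\rho|}$ times a polynomial of degree $\le1$ in $\rho_1$ whose coefficients depend only on the tail of $\rho$. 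This \emph{linear} (not bounded) dependence on $\rho_1$ is precisely what produces the extra pole; it has no Schur-function analogue, where the corresponding factor, $h_n$ of a single letter, is $z^n$ with constant coefficient $1$. I would then split $g(z)=g_{\le}(z)+g_{>}(z)$ according to whether $\rho_1\le\lambda_1$ or $\rho_1>\lambda_1$. Only finitely many $\rho$ occur in $g_{\le}(z)$, and for each of them $Q_{p\lambda/\rho}(D)$ has $A$-degree $\ge|\lambda|+p-|\rho|$, so the $p$-sum terminates at both ends and $g_{\le}(z)$ is a Laurent polynomial. In $g_{>}(z)$ the shifted skew diagram $p\lambda/\rho$ splits, its first row (length $p-\rho_1$) separating in columns from the rest when $\rho_1>\lambda_1$, so by an extension of Proposition~\ref{p>t+r} one has $Q_{p\lambda/\rho}=q_{p-\rho_1}Q_{\lambda/\rho'}$ with $\rho'$ the tail of $\rho$. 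Each pairing $\bigl(q_k(D)Q_{\lambda/\rho'}(D),Q_\nu(A-1/z)\bigr)$ is then a single monomial in $z$ by degree-matching, and vanishes for all but finitely many $k$ and $\rho'$; carrying out the remaining geometric-type sum $\sum_{\rho_1>\lambda_1}(\alpha\rho_1+\beta)z^{m\rho_1}$ over $\rho_1$ yields a $\C$-combination of $\tfrac{1}{1-z^m}$ and $\tfrac{1}{(1-z^m)^2}$, the latter from $\sum_{\rho_1}\rho_1 z^{m\rho_1}$. Hence $g_{>}(z)=\frac{G_1(z)}{1-z^m}+\frac{G_2(z)}{(1-z^m)^2}$ for Laurent polynomials $G_1,G_2$. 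Finally I would apply Lemma~\ref{separate} to each of these (after splitting $G_1,G_2$ by residues modulo $m$); since every monomial of $g(z)$ has exponent $\equiv-|\nu|\pmod m$, each rational remainder reduces to a single term $c_iz^{k}/(1-z^m)^i$ with a common exponent $k$, and the polynomial leftovers are absorbed into $P(z)$.

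The step I expect to be the main obstacle is the degree bookkeeping that pins $\deg P(z)$ down to $(\lambda_1+|\nu|)m$: one must fix the cutoff between $g_{\le}$ and $g_{>}$, bound the degrees of $G_1,G_2$ (which control the polynomial produced by Lemma~\ref{separate}) and of $g_{\le}$, and verify that all of these fit under the claimed bound, much as in the last paragraph of the proof of Theorem~\ref{stability thm 1}. A secondary technical point is the extension of Proposition~\ref{p>t+r} to skewing $p\lambda$ by a two-row strict partition, i.e.\ the column-disjoint splitting of $p\lambda/\rho$ when $\rho_1>\lambda_1$, which is forced on us precisely because $Q_\rho(2z^m)$ requires $\ell(\rho)\le2$.
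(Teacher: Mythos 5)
Your part (1) is essentially the paper's own argument. For part (2) your plan is sound but follows a genuinely different route. The paper sets $r=|\nu|-|\lambda|$, splits the $p$-sum at $p\le\lambda_1+r$, and peels the two copies of $z^m$ out of $Q_{(m)}(A+z)$ one letter at a time, so that only one-row skews $Q_{p\lambda/(k)}$ ever occur; the linear factor responsible for the double pole then surfaces through $\sum_{j}Q_{(p-i-j)}(z^m)Q_{(j)}(z^m)=q_{p-i}(2z^m)=4(p-i)z^{m(p-i)}$, Proposition~\ref{p>t+r} turns $Q_{p\lambda/(p-i)}$ into $q_iQ_\lambda$, the sums $\sum_p pz^{mp}$ and $\sum_p z^{mp}$ give the $(1-z^m)^{-2}$ and $(1-z^m)^{-1}$ pieces, and Lemma~\ref{separate} finishes. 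You instead expand over the two-letter alphabet $2z^m$ in a single step, so two-row skews $Q_{p\lambda/\rho}$ appear, you split on $\rho_1\le\lambda_1$ versus $\rho_1>\lambda_1$, and the double pole comes from the linearity of $Q_\rho(2z^m)$ in $\rho_1$, i.e.\ from \eqref{q_r(2z)}. The cost of your route is two auxiliary facts not in the paper: the vanishing of $Q_\rho$ on a two-letter alphabet for $\ell(\rho)>2$ (standard, but needs a line), and above all the two-row extension of Proposition~\ref{p>t+r}, $Q_{p\lambda/\rho}=q_{p-\rho_1}Q_{\lambda/(\rho_2)}$ when $\rho_1>\lambda_1$, which you rightly flag as a lemma to be proved (it is true, by column-disjointness of the shifted skew shape or a direct Pfaffian expansion, and e.g.\ $Q_{(3,1)/(2,1)}=q_1$ confirms it). What it buys is completeness of the bookkeeping: your $\sum_\rho Q_{p\lambda/\rho}(D)Q_\rho(2z^m)$ with $\ell(\rho)\le2$ is the full sum-rule expansion, whereas the paper's second application of the skew sum rule retains only length-one $\nu$ even though $Q_{\nu/(j)}(z^m)$ need not vanish for $\ell(\nu)=2$ (e.g.\ $Q_{(2,1)/(1)}(w)=2w^2$); the terms your two-row lemma handles are exactly the ones that iterated one-row peeling would otherwise have to account for, so you cannot sidestep that lemma by imitating the paper more closely. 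Two small corrections to your sketch: the pairing $\bigl(q_k(D)Q_{\lambda/(\rho_2)}(D),Q_\nu(A-1/z)\bigr)$ is a Laurent polynomial in $z$, not a single monomial --- what matters is only that it is independent of $\rho_1$ and vanishes for all but finitely many $(k,\rho_2)$, since every monomial of $D$ has positive $A$-degree and total degree $m$; and, as you acknowledge, the bound $\deg P(z)\le(\lambda_1+|\nu|)m$ still requires explicit bookkeeping of the degrees of $g_{\le}$, $G_1$, $G_2$ via Lemma~\ref{separate}, parallel to the end of the paper's proof.
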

    
    \begin{proof}
        As in the proof of Theorem \ref{stability thm 1}, we have
        \[g(z)=\sum_{p\in\Z}\left(Q_{p\lambda} \left(Q_\mu+2\sum_{i=1}^{\mu_1}Q_{\mu/(i)}z^i\right),Q_\nu(A-1/z)\right).\]
        
        First, if $\ell(\mu)>1$, then $Q_{\mu/(i)}$ has weight $|\mu|-i\geq|\mu|-\mu_1\geq1$. Also, we have that the inner product $\left(Q_{p\lambda}\of Q_\mu(A+z),Q_\nu(A-1/z)\right)$ is $0$ when $(p+|\lambda|)(|\mu|-\mu_1)>|\nu|$, i.e., when $p>\frac{|\nu|}{|\mu|-\mu_1}-|\lambda|$. So, we see that $g(z)$ is a Laurent polynomial of degree at most $(p+|\lambda|)\cdot\mu_1=\frac{|\nu|\cdot\mu_1}{|\mu|-\mu_1}$.
        
        Next, suppose $\mu=(m)$, and let $r=|\nu|-|\lambda|$. Then, $g(z)=R(z)+T(z)$, where
        \[R(z)=\sum_{p\leq \lambda_1+r}\left(Q_{p\lambda}\of Q_{(m)}(A+z),Q_{s\nu}(A-1/z)\right)\]
        is a Laurent polynomial of degree at most $(|\lambda|+\lambda_1+r)m=(\lambda_1+|\nu|)m$, and
        \[T(z)=\sum_{p> \lambda_1+r}\left(Q_{p\lambda}\of Q_{(m)}(A+z),Q_{s\nu}(A-1/z)\right).\]
        Now, we can write $Q_{(m)}(A+z)=(Q_{(m)}(A+z)-z^m)+z^m$, so by the sum rule we have
        \[T(z)=\sum_{p> \lambda_1+r}\sum_{j=0}^p\left(Q_{p\lambda/(j)}(Q_{(m)}(A+z)-z^m)Q_{(j)}(z^m),Q_\nu(A-1/z)\right).\]
        Similarly, we write $Q_{(m)}(A+z)-z^m=(Q_{(m)}(A+z)-2z^m)+z^m$, and so by the skew sum rule we have
        \[T(z)=\sum_{p> \lambda_1+r}\sum_{j=0}^p\sum_{k=0}^p\left(Q_{p\lambda/(k)}(Q_{(m)}(A+z)-2z^m)Q_{(k-j)}(z^m)Q_{(j)}(z^m),Q_\nu(A-1/z)\right).\]
        Notice that
        \begin{align*}
            Q_{(m)}(A+z)-2z^m&=\sum_{i=0}^mq_{m-i}(A)q_i(z)-q_m(z)\\
            &=q_m(A)+2\sum_{i=1}^{m-1}q_{m-i}(A)z^i
        \end{align*}
        is a sum of terms with nonzero weight. Thus, in the inner product we only need to consider terms where $p+|\lambda|-k\leq|\nu|$, that is, where $k\geq p-r$. Additionally, $Q_{(k-j)}(z^m)=0$ unless $k-j\geq0$, i.e., $j\leq k$. Therefore, we have
        \begin{align*}
            T(z)&=\sum_{p> \lambda_1+r}\sum_{k=p-r}^p\left(Q_{p\lambda/(k)}(Q_{(m)}(A+z)-2z^m)\sum_{j=0}^kQ_{(k-j)}(z^m)Q_{(j)}(z^m),Q_\nu(A-1/z)\right)\\
            &=\sum_{p> \lambda_1+r}\sum_{i=0}^r\left(Q_{p\lambda/(p-i)}(Q_{(m)}(A+z)-2z^m)\sum_{j=0}^{p-i}Q_{(p-i-j)}(z^m)Q_{(j)}(z^m),Q_\nu(A-1/z)\right)
        \end{align*}
        after reindexing with $i=p-k$. Now, note that $p-i>(\lambda_1+r)-r=\lambda_1>0$. Hence, from the proof of \eqref{q_r(2z)} we have that
        \[\sum_{j=0}^{p-i}Q_{(p-i-j)}(z^m)Q_{(j)}(z^m)=q_{p-i}(2z^m)=4(p-i)z^{m(p-i)}.\]

        
        Furthermore, we can apply Proposition \ref{p>t+r} to get $Q_{p\lambda/(p-i)}=q_iQ_\lambda$ since $p>\lambda_1+r\geq\lambda_1+i$. Hence, we get
        \begin{align*}
            T(z)&=\sum_{p> \lambda_1+r}\sum_{i=0}^r\left(q_iQ_\lambda(Q_{(m)}(A+z)-2z^m)4(p-i)z^{m(p-i)},Q_\nu(A-1/z)\right)\\
            &=\sum_{p> \lambda_1+r}\sum_{i=0}^r4(p-i)z^{m(p-i)}\left(q_iQ_\lambda(Q_{(m)}(A+z)-2z^m),Q_\nu(A-1/z)\right).
        \end{align*}
        Now, we see that we may pull $z^{m(p-r)}$ out of the inner sum, and hence we have
        \begin{align*}
            T(z)&=\sum_{p> \lambda_1+r}z^{m(p-r)}\sum_{i=0}^r4(p-i)z^{m(r-i)}\left(q_iQ_\lambda(Q_{(m)}(A+z)-2z^m),Q_\nu(A-1/z)\right).
        \end{align*}
        Due to the factor $p-i$, we can write this as $T(z)=T_1(z)+T_2(z)$, where
        \begin{align*}
            T_1(z)&=\sum_{p> \lambda_1+r}pz^{m(p-r)}\sum_{i=0}^r4z^{m(r-i)}\left(q_iQ_\lambda(Q_{(m)}(A+z)-2z^m),Q_\nu(A-1/z)\right),
        \end{align*}
        and
        \begin{align*}
            T_2(z)&=\sum_{p> \lambda_1+r}z^{m(p-r)}\sum_{i=0}^r4(-i)z^{m(r-i)}\left(q_iQ_\lambda(Q_{(m)}(A+z)-2z^m),Q_\nu(A-1/z)\right).
        \end{align*}
        Next, we can see that we have $T_1(z)=\sum_{p> \lambda_1+r}pz^{m(p-r)}\cdot z^{k_1}H_1(z^m)$, where $z^{k_1}H_1(z^m)$ is a Laurent polynomial of degree at most $|\lambda|+r-1$. Furthermore, we see that
        \begin{align*}
            \sum_{p>0}pZ^p&=\frac{Z}{(1-Z)^2},
        \end{align*}
        and so we have
        \begin{align*}
            \sum_{p> \lambda_1+r}pz^{m(p-r)}&=z^{-mr}\sum_{p> \lambda_1+r}p(z^{m})^p\\
            &=\frac{z^{m(\lambda_1+r+1-r)}}{(1-z^m)^2}\\
            &=\frac{z^{m(\lambda_1+1)}}{(1-z^m)^2}.
        \end{align*}
        Therefore, this means we have
        \begin{align*}
            T_1(z)&=\frac{z^{m(\lambda_1+1)}}{(1-z^m)^2}\cdot z^{k_1}H_1(z^m).
        \end{align*}
        By Lemma \ref{separate}, we get
        \begin{align*}
            T_1(z)&=L_1(z)+\frac{c_1z^{k_1}}{1-z^m}+\frac{c_2z^{k_1}}{(1-z^m)^2}
        \end{align*}
        for some constants $c_1,c_2$, where $L_1(z)$ is a Laurent polynomial of degree at most $(|\lambda|+r-3)m$. Similarly, we have
        \begin{align*}
            T_2(z)&=\sum_{p> \lambda_1+r}z^{m(p-r)}\sum_{i=0}^r4(-i)z^{mi}\left(q_iQ_\lambda(Q_{(m)}(A+z)-2z^m),Q_\nu(A-1/z)\right)\\
            &=\frac{z^{m(\lambda_1+r+1-r)}}{1-z^m}\cdot z^{k_2}H_2(z^m)\\
            &=\frac{z^{m(\lambda_1+1)}}{1-z^m}\cdot z^{k_2}H_2(z^m)\\
            &=L_2(z)+\frac{c_3z^{k_2}}{1-z^m},
        \end{align*}
        where $L_2(z)$ is a Laurent polynomial of degree at most $(|\lambda|+r-1)m$. Therefore, we get
        \begin{align*}
            T(z)&=\left(L_1(z)+\frac{c_1z^{k_1}}{1-z^m}+\frac{c_2z^{k_1}}{(1-z^m)^2}\right)+\left(L_2(z)+\frac{c_3z^{k_2}}{1-z^m}\right)\\
            &=L(z)+\frac{c_1z^{k_1}+c_3z^{k_2}}{1-z^m}+\frac{c_2z^{k_1}}{(1-z^m)^2},
        \end{align*}
        where $L(z)$ has degree at most $(|\lambda|+r-3)m=(|\nu|-3)m$. Notice that the inner sums in the original definitions of $T_1(z)$ and $T_2(z)$ only differ by a factor of $-i$, so it follows that $k_1=k_2$. Finally, we see that $R(z)+L(z)$ has degree at most $(\lambda_1+|\nu|)m$.
    \end{proof}

    In other words, the stability theorems say that the sequences 
    \begin{align}
        (Q_\lambda\of Q_{p\mu},Q_{s\nu})&\qquad\text{for }p\in\Z,\text{ }s=|\lambda|(|\mu|+p)-|\nu|,\label{sequence:plethysm stability 1}\\
        (Q_{p\lambda}\of Q_{\mu},Q_{s\nu})&\qquad\text{for }p\in\Z,\text{ }s=(|\lambda|+p)|\mu|-|\nu|,~\ell(\mu)>1\label{sequence:plethysm stability 2}
    \end{align}
    stabilize for large enough $p$ (see Example \ref{example:first stability}). Meanwhile, when $\ell(\mu)=1$, the sequence (\ref{sequence:plethysm stability 2}) eventually increase linearly (see Example \ref{example:non-stability}). Recall from Proposition \ref{Q_(-p)lambda} that for $p<0$, we have $Q_{p\lambda}=0$ when $-p$ is not a part of $\lambda$. Since $\ell(\lambda)$ is finite, this means that both sequences always stabilize to $0$ as $p\to-\infty$.

    The difference between stability for Schur and Schur's $Q$-functions can be seen, in part, as follows. In our special case, we consider plethysm of the form $Q_{(p)}(Q_{(m)}(z))=4pz^{pm}$, and we see that the coefficient $4p$ increases linearly as $p\to\infty$. On the other hand, in the Schur function case we have $S_{(p)}(S_{(m)}(z))=z^{pm}$, which has a constant coefficient for all $p$.

    \begin{example}\label{example:first stability}
        Let $\lambda=(2,1)$, $\mu=(2)$, and $\nu=(4,3,2)$. We wish to compute the sequence $(Q_\lambda\of Q_{p\mu},Q_{s\nu})$ for $p\in\Z$. First, recall that for any $F\in\Gamma$, we may write
        \begin{align*}
            F&=\sum_\gamma \frac{(F,Q_\gamma)}{(Q_\gamma,Q_\gamma)}Q_\gamma=\sum_\gamma 2^{-\ell(\gamma)}(F,Q_\gamma)Q_\gamma,
        \end{align*}
        where the sum is over strict partitions $\gamma$. For $p\geq2$ we compute via computer algebra the basis expansion of $Q_\lambda\of Q_{p\mu}$,
        \begin{align*}
            Q_\lambda\of Q_{2\mu} &=0\\
            Q_\lambda\of Q_{3\mu} &=60\cdot Q_{6\nu}+\cdots,\\
            Q_\lambda\of Q_{4\mu} &=536\cdot Q_{9\nu}+\cdots,\\
            Q_\lambda\of Q_{5\mu} &=664\cdot Q_{12\nu}+\cdots,\\
            Q_\lambda\of Q_{6\mu} &=664\cdot Q_{15\nu}+\cdots,\\
            Q_\lambda\of Q_{7\mu} &=664\cdot Q_{18\nu}+\cdots.
        \end{align*}
        After multiplying these coefficients by $2^4$, we get the sequence
        \[0,\qquad 960,\qquad 8576,\qquad 10624,\qquad 10624,\qquad 10624,\qquad \cdots\]
        which stabilizes to $10624$.
    \end{example}

    \begin{example}
        Let $\lambda=(1)$, $\mu=(2,1)$, and $\nu=(3,2)$. We note that $\ell(\mu)>1$. We wish to compute the sequence $(Q_{p\lambda}\of Q_{\mu},Q_{s\nu})$ for $p\geq1$. We compute
        \begin{align*}
            Q_{1\lambda}\of Q_\mu &=0\cdot Q_{1\nu}+\cdots,\\
            Q_{2\lambda}\of Q_\mu &=12\cdot Q_{4\nu}+\cdots,\\
            Q_{3\lambda}\of Q_\mu &=168\cdot Q_{7\nu}+\cdots,\\
            Q_{4\lambda}\of Q_\mu &=204\cdot Q_{10\nu}+\cdots,\\
            Q_{5\lambda}\of Q_\mu &=0\cdot Q_{13\nu}+\cdots,\\
            Q_{6\lambda}\of Q_\mu &=0\cdot Q_{16\nu}+\cdots,\\
            Q_{7\lambda}\of Q_\mu &=0\cdot Q_{19\nu}+\cdots.
        \end{align*}
        So, we get the sequence $0,96,1344,1632,0,0,0,\ldots$, which stabilizes to $0$.
    \end{example}

    \begin{example}\label{example:non-stability}
        Let $\lambda=(1)$, $\mu=(3)$, and $\nu=(2,1)$. We note that $\ell(\mu)=1$. We wish to compute the sequence $(Q_{p\lambda}\of Q_{\mu},Q_{s\nu})$ for $p\geq1$. We compute
        \begin{align*}
            Q_{1\lambda}\of Q_\mu &=0\cdot Q_{3\nu}+\cdots,\\
            Q_{2\lambda}\of Q_\mu &=12\cdot Q_{6\nu}+\cdots,\\
            Q_{3\lambda}\of Q_\mu &=88\cdot Q_{9\nu}+\cdots,\\
            Q_{4\lambda}\of Q_\mu &=256\cdot Q_{12\nu}+\cdots,\\
            Q_{5\lambda}\of Q_\mu &=464\cdot Q_{15\nu}+\cdots,\\
            Q_{6\lambda}\of Q_\mu &=672\cdot Q_{18\nu}+\cdots,\\
            Q_{7\lambda}\of Q_\mu &=880\cdot Q_{21\nu}+\cdots.
        \end{align*}
        So, we get the sequence $0,96,704,2048,3712,5376,7040,\ldots$. This sequence does not stabilize, but the differences between terms is the sequence $0,96,608,1344,1664,1664,1664,\ldots$, which stabilizes to $1664$.
    \end{example}
    
    \section{Recurrence Formulas}
	
    Now, we show that Schur's $Q$-functions have recurrence formulas that are similar to those for Schur functions. Recurrence formulas for Schur functions have historically been used to assist with difficult plethysm computations. In \cite{carre-thibon:1992}, it is shown that vertex operator methods may be used to generalize recurrence relations from \cite{murnaghan:1954} and \cite{butler-king:1973}. We now show that the vertex operator method also produces analogous recurrence relations for Schur's $Q$-functions. However, in this case, the recurrence relations have many more terms.

    In particular, the Foulkes conjecture \cite{foulkes:1950} states that $S_n\of S_m-S_m\of S_n$ is Schur-positive when $m\leq n$. Considering the Schur's $Q$-function analogue of this statement, plethysm of the form $q_n\of q_m$ is of interest. We are able to find an analogue of the Butler-King formula to compute $D_k(q_n\of q_m)$, where $D_k$ is defined in \eqref{def D_k}.

    \subsection{The Recurrence Formula}


    We start by computing the action of the vertex operator $\kappa_z \cdot\kappa_{-1/z}^\perp$ on $\kappa_1(q_m(A))$.

    \begin{lemma}\label{as_prod}
        Let $m>0$ be a positive integer, then
        \[\kappa_z \cdot\kappa_{-1/z}^\perp\kappa_1(q_m(A))=\kappa_z(A)\kappa_1(q_m)\left(\prod_{n=1}^m \kappa_{(-1/z)^n}(q_{m-n})\right)^2.\]
    \end{lemma}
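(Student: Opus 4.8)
The plan is to compute $\kappa_z\cdot\kappa_{-1/z}^\perp$ acting on $\kappa_1(q_m(A))$ by first moving the adjoint operator $\kappa_{-1/z}^\perp$ across and identifying what it does to a plethystic expression, and then recombining. By Lemma \ref{F(A+z)}, applying $\kappa_{-1/z}^\perp$ to any $F\in\Gamma\ox\C[z]$ gives $F(A-1/z)$, so $\kappa_{-1/z}^\perp\kappa_1(q_m(A))=\kappa_1(q_m(A-1/z))$. Thus the first key step is to expand $q_m(A-1/z)$ via the sum rule (Proposition \ref{sum rule}, at the level of the $q_n$; equivalently $\kappa_w(A-1/z)=\kappa_w(A)\kappa_w(-1/z)^{-1}$ from \eqref{eqn:kappa_z(A+B)}). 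Using Proposition \ref{Q_lambda(z)} to evaluate $q_i(-1/z)=2(-1/z)^i$ for $i\geq1$ (and $=1$ for $i=0$), one gets
\[
q_m(A-1/z)=\sum_{i\geq 0} q_{m-i}(A)\,(-1)^i \tilde q_i(1/z),
\]
where $\tilde q_i$ accounts for the inverse alphabet $-1/z$; since $q_{m-i}=0$ for $i>m$ this is a finite sum in $q_0,\dots,q_m$. The point is that $q_m(A-1/z)$ is a polynomial expression in the $q_j(A)$ with coefficients in $\C[z,z^{-1}]$, and therefore $\kappa_1$ of it can be handled by the multiplicativity $\kappa_1(X+Y)=\kappa_1(X)\kappa_1(Y)$ of the generating series.

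The second key step is to apply $\kappa_1(\cdot)$ to this sum. Using $\kappa_1(X-Y)=\kappa_1(X)\kappa_1(Y)^{-1}$ and the homogeneity/scaling Proposition \ref{plethysm zA} to pull the powers of $z$ outside, one should obtain that $\kappa_1(q_m(A-1/z))$ factors as $\kappa_1(q_m(A))$ times a product over $n=1,\dots,m$ of factors of the form $\kappa_{(-1/z)^n}(q_{m-n})$, each appearing to the power $2$ because the inverse-alphabet evaluation $q_i(-1/z)=2(-1/z)^i$ carries the factor $2$, which after the $\kappa_1$-plethysm becomes a squaring (this is exactly the place where the $q$-functions differ from Schur functions and where the eventual "factor of 2" phenomenon enters). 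Concretely, I expect to verify
\[
\kappa_1\!\left(q_m(A-1/z)\right)=\kappa_1(q_m(A))\left(\prod_{n=1}^m \kappa_{(-1/z)^n}(q_{m-n})\right)^{\!2}.
\]
Finally, multiplying on the left by $\kappa_z=\kappa_z(A)$ (which commutes with everything in sight since it is just multiplication by a series in $\Gamma\ox\C[z]$) gives the stated identity.

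The main obstacle, and the step that needs the most care, is the bookkeeping in the second step: correctly tracking how the inverse alphabet $-1/z$ distributes through $\kappa_1$, getting the exponent on each $\kappa_{(-1/z)^n}(q_{m-n})$ to come out as exactly $2$, and getting the index range $n=1,\dots,m$ right (equivalently, that the $q_0$-term contributes the leading $\kappa_1(q_m)$ factor and nothing spurious survives). I would do this by writing everything through the double generating series $\kappa_w\kappa_1$ and matching, using \eqref{eqn:kappa_z(P)} as the definition of plethysm on the generating-series level, so that the identity reduces to an equality of explicit products of $(1\pm w\cdot(\text{monomial}))$-type factors; the $q$-function doubling then appears transparently as the difference between $\frac{1+wx}{1-wx}$ and a single factor $(1-wx)^{-1}$. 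Everything else — commuting $\kappa_z(A)$ past the adjoint, invoking Lemma \ref{F(A+z)}, and the finiteness of the sum — is routine.
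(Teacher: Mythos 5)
Your proposal is correct and follows essentially the same route as the paper: apply Lemma \ref{F(A+z)} to get $\kappa_z(A)\kappa_1(q_m(A-1/z))$, expand $q_m(A-1/z)=q_m(A)+2\sum_{i=1}^m(-1/z)^iq_{m-i}(A)$ via the sum rule and \eqref{q_r(-z)}, and use multiplicativity of $\kappa_1$ together with Proposition \ref{plethysm zA} (so $\kappa_1((-1/z)^nq_{m-n})=\kappa_{(-1/z)^n}(q_{m-n})$), the factor $2$ producing exactly the squared product. This is the paper's argument, including the identification of the doubling as the source of the square.
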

    
    \begin{proof}
        First, we expand out the LHS. We apply Lemma \ref{F(A+z)} to get
        \[\kappa_z \kappa_{-1/z}^\perp\kappa_1(q_m(A))=\kappa_z(A)\kappa_1(q_m(A-1/z)),\]
        and then we use the sum rule to get
        \[\kappa_z(A)\kappa_1\left(\sum_{i=0}^m q_{m-i}(A)q_i(-1/z)\right).\]
        From \eqref{q_r(-z)}, we get
        \[\kappa_z(A)\kappa_1\left(q_m(A)+2\sum_{i=1}^m (-1/z)^i q_{m-i}(A)\right).\]
        Now, recall from (\ref{eqn:kappa_z(A+B)}) that $\kappa_z(A+B)=\kappa_z(A)\kappa_z(B)$. Therefore, we have
        \begin{align*}
            \kappa_z(A)\kappa_1(q_m)\left(\kappa_1\left(\sum_{i=1}^m (-1/z)^i q_{m-i}(A)\right)\right)^2.
        \end{align*}
        It remains to show that
        \begin{equation}\label{eqn:product of gen fcns}
            \kappa_1\left(\sum_{i=1}^m (-1/z)^i q_{m-i}(A)\right)=\prod_{n=1}^m \kappa_{(-1/z)^n}(q_{m-n}).
        \end{equation}
        First, from Proposition \ref{plethysm zA} we have $q_n(zA)=z^nq_n(A)$, and so
        \begin{align*}
            \kappa_1(zA)&=\sum_{n\in\Z}q_n(zA)\\
            &=\sum_{n\in\Z}z^nq_n(A)\\
            &=\kappa_z(A).
        \end{align*}
        It follows that $\kappa_1((-1/z)^iq_{m-i})=\kappa_{(-1/z)^i}(q_{m-i})$, and so by applying \eqref{eqn:kappa_z(A+B)} we have \eqref{eqn:product of gen fcns}.

    \end{proof}

    Carr\'e and Thibon \cite{carre-thibon:1992} showed that the Schur functions satisfy the analogous identity
    \[\sigma_z\lambda_{-1/z}^\perp \sigma_1(S_m(A))=\sigma_z(A)\sigma_1(S_m)\lambda_{-1/z}(S_{m-1}),\]
    where $\sigma_z$ and $\lambda_z$ are the generating functions of the elementary and homogeneous symmetric functions defined in \eqref{eqn:sigma_z} and \eqref{eqn:lambda_z}, respectively. Hence, we see that with Schur's $Q$-functions we get a product whose number of factors depends on $m$, whereas there are exactly three factors for Schur functions. We now use Lemma \ref{as_prod} to obtain the recurrence formula. First, we will make use of some operators that act on $Q_\lambda$ by removing a part of $\lambda$. For positive integers $k>0$, we define the operator $D_k$ to act on $Q_\lambda$ by
    \begin{equation}\label{def D_k}
        D_k(Q_\lambda):=
        \begin{cases}
            (-1)^{i+1}2Q_{\lambda\setminus\{\lambda_i\}}&\text{if }k=\lambda_i\text{ for some i},\\
            0&\text{otherwise}.
        \end{cases}
    \end{equation}
    It follows from Proposition \ref{Q_(-p)lambda} that we get the following.
    \begin{remark}\label{Q_(-k)lam}
        For a partition $\lambda$ and positive integer $k>0$, we have
        \[Q_{(-k)\lambda}=(-1)^kD_k Q_\lambda.\]
    \end{remark}

    Now, we are ready to prove a Schur's $Q$-function analogue of the recurrence formula \cite[Theorem 5.1]{carre-thibon:1992}.
    
    \begin{theorem}[Recurrence Formula]\label{thm_5_1_analogue}
        Let $m>0$ be a positive integer, then
        \[D_k \kappa_1(q_m(A))=\kappa_1(q_m)\sum_{(I+J)\cdot\delta-r=k}(-1)^rq_r(A)\prod_{s=1}^m q_{i_s}(q_{m-s})q_{j_s}(q_{m-s}),\]
        where $I=(i_1,\ldots,i_m),J=(j_1,\ldots,j_m)\in\Z^m$, $\delta=(1,2,\ldots,m)$, and $I\cdot\delta$ is the usual dot product.
    \end{theorem}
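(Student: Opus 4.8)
The plan is to extract the coefficient of $z^{-k}$ from the identity in Lemma~\ref{as_prod}, using Theorem~\ref{vertex operator identity} together with Remark~\ref{Q_(-k)lam} to identify that coefficient with $(-1)^k D_k \kappa_1(q_m(A))$. Concretely, apply $\kappa_z\cdot\kappa_{-1/z}^\perp$ to $\kappa_1(q_m(A)) = \sum_\mu c_\mu Q_\mu$ (expanded in the basis of Schur's $Q$-functions over strict partitions) and use Theorem~\ref{vertex operator identity} on each $Q_\mu$ to write the left side as $\sum_{p\in\Z}\big(\sum_\mu c_\mu Q_{p\mu}\big)z^p$; the coefficient of $z^p$ for $p<0$ is, by Remark~\ref{Q_(-k)lam} with $k=-p$, equal to $(-1)^k D_k$ applied to $\kappa_1(q_m(A))$ (the positive-$p$ and $p=0$ coefficients we simply discard). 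So it remains to compute the coefficient of $z^{-k}$ on the right side of Lemma~\ref{as_prod}.

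On the right side we have $\kappa_z(A)\kappa_1(q_m)\big(\prod_{n=1}^m \kappa_{(-1/z)^n}(q_{m-n})\big)^2$. First expand each generating function: $\kappa_z(A) = \sum_{r\geq 0} q_r(A)z^r$, and for each $n$, $\kappa_{(-1/z)^n}(q_{m-n}) = \sum_{i\in\Z} q_i(q_{m-n})(-1/z)^{ni} = \sum_i (-1)^{ni} q_i(q_{m-n}) z^{-ni}$ (with the convention $q_i = 0$ for $i<0$, so these sums are really over $i\geq 0$). Squaring the product over $n$ introduces two independent index vectors $I=(i_1,\dots,i_m)$ and $J=(j_1,\dots,j_m)$, contributing a factor $\prod_{s=1}^m q_{i_s}(q_{m-s})q_{j_s}(q_{m-s})$ with sign $(-1)^{\sum_s s(i_s+j_s)} = (-1)^{(I+J)\cdot\delta}$ and a power of $z$ equal to $z^{r - (I+J)\cdot\delta}$ after multiplying by $z^r$ from $\kappa_z(A)$. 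Extracting the coefficient of $z^{-k}$ forces $r - (I+J)\cdot\delta = -k$, i.e.\ $(I+J)\cdot\delta - r = k$. This yields
\[
(-1)^k D_k\kappa_1(q_m(A)) = \kappa_1(q_m)\sum_{(I+J)\cdot\delta - r = k}(-1)^{r + (I+J)\cdot\delta} q_r(A)\prod_{s=1}^m q_{i_s}(q_{m-s})q_{j_s}(q_{m-s}).
\]
Now observe that on the constraint set $(I+J)\cdot\delta - r = k$ we have $(I+J)\cdot\delta = k + r$, so $(-1)^{(I+J)\cdot\delta} = (-1)^{k+r}$, whence $(-1)^{r + (I+J)\cdot\delta} = (-1)^{k + 2r} = (-1)^k$. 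Cancelling the overall $(-1)^k$ from both sides leaves exactly
\[
D_k\kappa_1(q_m(A)) = \kappa_1(q_m)\sum_{(I+J)\cdot\delta - r = k}(-1)^r q_r(A)\prod_{s=1}^m q_{i_s}(q_{m-s})q_{j_s}(q_{m-s}),
\]
which is the claimed formula.

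The main obstacle is bookkeeping rather than conceptual: one must be careful that applying $\kappa_z\cdot\kappa_{-1/z}^\perp$ term-by-term to the basis expansion of $\kappa_1(q_m(A))$ is legitimate (it is, since $\kappa_1(q_m(A))$ lies in $\Gamma$ and both $\kappa_z$ and $\kappa_{-1/z}^\perp$ act by $\C[z,z^{-1}]$-linear extensions of well-defined operators, matching the hypotheses under which Theorem~\ref{vertex operator identity} was stated), and that the coefficient extraction in $z$ on the right side of Lemma~\ref{as_prod} commutes with the infinite sums — this is fine because for each fixed power of $z$ only finitely many $(r, I, J)$ contribute, as $q_i = 0$ for $i < 0$ forces all of $r, i_s, j_s \geq 0$ and the linear constraint then bounds them. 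One should also double-check the sign reconciliation $(-1)^{r+(I+J)\cdot\delta} = (-1)^k$ on the constraint set, which is the one place the identity could go wrong; the computation above shows it works out cleanly.
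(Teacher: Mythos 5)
Your proposal is correct and takes essentially the same route as the paper's proof: expand $\kappa_1(q_m(A))$ in the $Q_\lambda$ basis, apply Theorem \ref{vertex operator identity} together with Lemma \ref{as_prod}, extract the coefficient of $z^{-k}$ via Remark \ref{Q_(-k)lam}, and reconcile the signs using $(I+J)\cdot\delta=k+r$ on the constraint set. One small caveat: your finiteness remark is off --- for fixed $k$ the constraint $r-(I+J)\cdot\delta=-k$ does \emph{not} bound $r,I,J$ (e.g.\ the factors $q_{i_m}(q_0)=2$ have weight $0$), so the coefficient of $z^{-k}$ is an infinite series; the manipulations are nevertheless legitimate because each fixed weight-graded component receives only finitely many contributions.
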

    
    \begin{proof}
        First, we start with the product $\kappa_z \cdot \kappa_{-1/z}^\perp\kappa_1(q_m(A))$, and expand it in a different way then in Lemma \ref{as_prod}. Let $\kappa_1(q_m(A))=\sum_\lambda b_\lambda Q_\lambda$ be the expansion of $\kappa_1(q_m(A))$ as a linear combination of basis elements. Substituting this expansion for $\kappa_1(q_m(A))$, we get
        \begin{align*}
            \kappa_z \kappa_{-1/z}^\perp\kappa_1(q_m(A))&=\kappa_z \kappa_{-1/z}^\perp\sum_\lambda b_\lambda Q_\lambda\\
            &=\sum_\lambda b_\lambda \kappa_z \kappa_{-1/z}^\perp Q_\lambda.
        \end{align*}
        Then, we apply Theorem \ref{vertex operator identity} to $\kappa_z \kappa_{-1/z}^\perp Q_\lambda$ to get $\sum_\lambda b_\lambda\sum_{p\in\Z} Q_{p\lambda}z^p$. Combining this with Lemma \ref{as_prod}, we have
        \begin{equation}\label{sum_=_prod}
            \sum_{p\in\Z}\sum_\lambda z^p b_\lambda Q_{p\lambda}(A)=\kappa_z(A)\kappa_1(q_m)\left(\prod_{n=1}^m \kappa_{(-1/z)^n}(q_{m-n})\right)^2.
        \end{equation}

        On the LHS of \eqref{sum_=_prod}, we see that the coefficient of $z^{-k}$ is
        \begin{align*}
            \sum_\lambda b_\lambda Q_{(-k)\lambda}&=\sum_\lambda b_\lambda(-1)^kD_k Q_\lambda\\
            &=(-1)^kD_k\sum_\lambda b_\lambda Q_\lambda\\
            &=(-1)^kD_k\kappa_1(q_m).
        \end{align*}
        Meanwhile, on the RHS of \eqref{sum_=_prod} we see that the coefficient of $z^{-k}$ is
        \begin{align*}
            \sum&(-1)^{1i_1+\cdots+mi_m+1j_1+\cdots+mj_m}q_r(A)\kappa_1(q_m)q_{i_1}(q_{m-1})\cdots q_{i_m}(q_0)q_{j_1}(q_{m-1})\cdots q_{j_m}(q_0),
        \end{align*}
        where the sum ranges over all $I,J\in\Z^m$ and $r\in\Z$ such that $r-1i_1-\cdots-mi_m-1j_1+\cdots+mj_m=-k$, i.e., such that $r-(I+J)\cdot\delta=-k$. Since $(I+J)\cdot\delta=k+r$, we may rewrite this as
        \begin{align*}
            \kappa_1(q_m)\sum_{(I+J)\cdot\delta-r=k}(-1)^{k+r}q_r(A)q_{i_1}(q_{m-1})\cdots q_{i_m}(q_0)q_{j_1}(q_{m-1})\cdots q_{j_m}(q_0).
        \end{align*}
    \end{proof}
    


    \subsection{Specializations of the Recurrence Formula}

    In \cite{carre-thibon:1992}, they specialize their recurrence formula for Schur functions to get the reccurence formulas of Butler-King and Murnaghan. Define the operators acting on Schur functions by $C_k(S_\lambda)=S_{\lambda-(1^k)}$ if $\ell(\lambda)=k$ and $C_k(S_\lambda)=0$ otherwise. The Butler-King formulas \cite{butler-king:1973} are
    \begin{align*}
        C_k(S_n\of S_m)&=\sum_{j=0}^{n-k}(-1)^j S_{n-k-j}(S_m) e_{k+j}(S_{m-1})S_j,\\
        C_k(e_n\of S_m)&=\sum_{j=0}^{n-k}(-1)^je_{n-k-j}(S_m)S_{k+j}(S_{m-1})S_j.
    \end{align*}
    Murnaghan's formulas \cite{murnaghan:1954} are
    \begin{align*}
        \sum_{p=0}^{n-k}(-1)^p e_p(S_m)\cdot C_k S_{n-p}(S_m)=(-1)^{n-k}e_n(S_{m-1})S_{n-k},\\
        \sum_{p=0}^{n-k}(-1)^pS_p(S_m)\cdot C_k e_{n-p}(S_m)=(-1)^{n-k}S_n(S_{m-1})S_{j-k}.
    \end{align*}

    The B-K equations above are related to each other through the involution $\omega$. Since Schur's $Q$-functions are invariant under $\omega$, we only get one B-K analogue, and similarly we only get one Murnaghan analogue. To find these analogues for Schur's $Q$-functions, we can identify terms of equal weights on both sides of the equation in Theorem \ref{thm_5_1_analogue}. First, we have the analogue of the B-K formulas.
    
    \begin{theorem}\label{B-K_analogue}
        Let $k,n,m>0$ be positive integers, then
        \[D_k(q_n\of q_m)=\sum_{I,J\in\Z^m}(-1)^{(I+J)\cdot\delta-k}q_{(I+J)\cdot\delta-k}(A)q_{n-|I|-|J|}(q_m)\prod_{s=1}^m q_{i_s}(q_{m-s})q_{j_s}(q_{m-s}).\]
    \end{theorem}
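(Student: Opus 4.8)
The plan is to obtain Theorem \ref{B-K_analogue} by reading off a single homogeneous component of the Recurrence Formula in Theorem \ref{thm_5_1_analogue}. The first observation is that, since $\kappa_z(P)=\sum_n q_n(P)z^n$ by \eqref{eqn:kappa_z(P)}, putting $z=1$ and $P=q_m(A)$ gives
\[\kappa_1(q_m(A))=\sum_{n\geq0}q_n(q_m(A))=\sum_{n\geq0}q_n\of q_m.\]
Plethysm multiplies degrees --- if $F$ is homogeneous of degree $a$ and $G$ of degree $b$, then $F\of G$ is homogeneous of degree $ab$, which one checks from \eqref{eqn:kappa_z(P)} or from Proposition \ref{plethysm zA} --- so $q_n\of q_m$ is homogeneous of degree $nm$, and the displayed series is exactly the decomposition of $\kappa_1(q_m(A))$ into homogeneous components (the identity of Theorem \ref{thm_5_1_analogue} being understood in the corresponding degree-completion of $\Gamma$). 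Since $D_k$ sends a strict $Q_\lambda$ of weight $d$ to a scalar multiple of a $Q$-function of weight $d-k$, it is a homogeneous operator of degree $-k$; applying it termwise gives $D_k\kappa_1(q_m(A))=\sum_{n\geq0}D_k(q_n\of q_m)$, where $D_k(q_n\of q_m)$ is homogeneous of degree $nm-k$.

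I would then expand the right-hand side of Theorem \ref{thm_5_1_analogue} the same way. Writing $\kappa_1(q_m)=\sum_{N\geq0}q_N(q_m)$ and, for each fixed pair $I,J\in\Z^m$, using the summation constraint to eliminate $r$ via $r=(I+J)\cdot\delta-k$, the right-hand side becomes
\[\sum_{N\geq0}\sum_{I,J\in\Z^m}(-1)^{(I+J)\cdot\delta-k}\,q_N(q_m)\,q_{(I+J)\cdot\delta-k}(A)\prod_{s=1}^m q_{i_s}(q_{m-s})\,q_{j_s}(q_{m-s}).\]
Next I would compute the degree of the summand indexed by $(N,I,J)$: the factor $q_N(q_m)$ has degree $Nm$; the factor $q_{(I+J)\cdot\delta-k}(A)$ has degree $(I+J)\cdot\delta-k$; and, since $q_{i_s}(q_{m-s})$ has degree $i_s(m-s)$, the product $\prod_{s=1}^m q_{i_s}(q_{m-s})q_{j_s}(q_{m-s})$ has degree $\sum_{s=1}^m(i_s+j_s)(m-s)=m|I+J|-(I+J)\cdot\delta$. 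Summing these three contributions, the total degree of the summand is $Nm+m|I+J|-k=m(N+|I|+|J|)-k$.

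Finally I would equate the homogeneous components of degree $nm-k$ on the two sides. On the left this component is exactly $D_k(q_n\of q_m)$; on the right it is the sub-sum over those triples $(N,I,J)$ with $m(N+|I|+|J|)-k=nm-k$, that is, with $N=n-|I|-|J|$, and substituting this value of $N$ gives precisely the asserted identity. The sum over all of $\Z^m\times\Z^m$ in the statement is really finite: a summand vanishes unless every $i_s\geq0$ and $j_s\geq0$ (since $q_i(q_{m-s})=0$ for $i<0$), unless $0\leq|I|+|J|\leq n$ (since $q_{n-|I|-|J|}(q_m)=0$ otherwise), and unless $(I+J)\cdot\delta\geq k$ (since $q_{(I+J)\cdot\delta-k}(A)=0$ otherwise).

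The one essential point to watch is the degree bookkeeping together with the observation that Theorem \ref{thm_5_1_analogue} is an identity of formal series in the graded completion, so that extracting the degree-$(nm-k)$ part is legitimate precisely because each graded piece is a finite sum. Beyond this there is no substantive obstacle; the only external ingredient is the standard fact that plethysm multiplies degrees.
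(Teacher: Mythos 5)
Your argument is correct and is essentially the paper's own proof: both extract the homogeneous component of weight $nm-k$ from the identity of Theorem \ref{thm_5_1_analogue}, using that the constraint forces $r=(I+J)\cdot\delta-k$ and that the weight count $m(\ell+|I|+|J|)-k=nm-k$ pins down $\ell=n-|I|-|J|$. Your added remarks on the graded completion, the homogeneity of $D_k$, and the finiteness of the sum over $I,J$ only make explicit what the paper leaves implicit (the paper notes the same vanishing conditions in its closing remark).
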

    
    \begin{proof}
        We identify the terms of equal weights ($nm-k$) from Theorem \ref{thm_5_1_analogue}. On the LHS, this is just $D_k(q_n\of q_m)$. We see that the RHS is multiplying $\sum_{\ell\geq0}q_\ell(q_m)$ with terms of weight $r+\sum_{s=1}^m i_s(m-s)+\sum_{s=1}^m j_s(m-s)=r+\sum_{s=1}^m(i_s+j_s)(m-s)$. Thus, we have weight
        \begin{align*}
            \ell m+r+\sum_{s=1}^m (i_s+j_s)(m-s)&=\ell m+r+m\sum_{s=1}^m (i_s+j_s) -\sum_{s=1}^m s(i_s+j_s)\\
            &=\ell m+r+m(|I|+|J|)-(I+J)\cdot\delta\\
            &=(\ell+|I|+|J|)m+r-(k+r)\\
            &=(\ell+|I|+|J|)m-k.
        \end{align*}
        Since $nm-k=(\ell+|I|+|J|)m-k$, we see that $\ell=n-|I|-|J|$. Also, note that $r=(I+J)\cdot\delta-k$. Thus, we get
        \begin{align*}
            D_k(q_n\of q_m)&=\sum_{I,J\in\Z^m} (-1)^r q_\ell(q_m)q_r(A)\prod_{s=1}^m q_{i_s}(q_{m-s})q_{j_s}(q_{m-s})\\
            &=\sum_{I,J\in\Z^m}(-1)^{(I+J)\cdot\delta-k}q_{n-|I|-|J|}(q_m)q_{(I+J)\cdot\delta-k}(A)\prod_{s=1}^m q_{i_s}(q_{m-s})q_{j_s}(q_{m-s}).
        \end{align*}
        Note that there is no restriction on $I,J\in\Z^m$ in the sum since $(I+J)\cdot\delta-r=(I+J)\cdot\delta-((I+J)\cdot\delta-k)=k$ for all $I,J$. However, it may be convenient to restrict to $I,J\in\Z_{\geq0}^m$ with $|I|+|J|\leq n$ and $(I+J)\cdot\delta\geq k$ since other terms will be $0$.
    \end{proof}

    Now, we may do a different specialization to obtain a Schur's $Q$-function analogue of Murnaghan's formulas.
    
    
    
    \begin{theorem}\label{Murn_analogue}
        Let $k,n,m>0$ be positive integers, then
        \[\sum_{p=0}^{n-k}(-1)^p q_p(q_m)\cdot D_k q_{n-p}(q_m)=\sum_{\substack{I,J\in\Z^m\\|I|+|J|=n}}(-1)^{((I+J)\cdot\delta-k}q_{(I+J)\cdot\delta-k}(A)\prod_{s=1}^m q_{i_s}(q_{m-s})q_{j_s}(q_{m-s}).\]
    \end{theorem}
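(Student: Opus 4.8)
The plan is to mimic the derivation of Theorem \ref{B-K_analogue}, but instead of reading off a single homogeneous component of the identity in Theorem \ref{thm_5_1_analogue}, I would form an alternating sum of its components weighted against the generating function $\kappa_1(q_m)=\sum_{\ell\ge 0}q_\ell(q_m)$, so that this generating factor gets cancelled. Concretely, recall that Theorem \ref{thm_5_1_analogue} gives
\[
D_k\kappa_1(q_m(A))=\kappa_1(q_m)\sum_{(I+J)\cdot\delta-r=k}(-1)^r q_r(A)\prod_{s=1}^m q_{i_s}(q_{m-s})q_{j_s}(q_{m-s}).
\]
Since $D_k$ is a weight-lowering operator by $k$, the weight-$(nm-k)$ component of the left side is exactly $D_k(q_n\of q_m)$, and on the right side it picks out those terms for which $\ell m+r+\sum_s(i_s+j_s)(m-s)=nm-k$, which (as computed in the proof of Theorem \ref{B-K_analogue}) is equivalent to $\ell=n-|I|-|J|$. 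The key move is then to note that the factor $q_\ell(q_m)=q_{n-|I|-|J|}(q_m)$ on the right is, up to sign, exactly the kind of factor that can be absorbed by multiplying by $(-1)^p q_p(q_m)$ and summing over $p$: one uses the involution-type cancellation $\sum_{p=0}^{N}(-1)^p q_p(q_m)q_{N-p}(q_m)=0$ for $N>0$, which follows from $\kappa_1(q_m)\cdot\kappa_1(-q_m)=\kappa_1(q_m-q_m)=\kappa_1(0)=1$ together with $q_p(-q_m)=(-1)^p q_p(q_m)$ from \eqref{eqn:negative alphabet}.

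So the steps, in order, would be: (1) take the identity of Theorem \ref{B-K_analogue} for a generic index $n'$ in place of $n$; (2) multiply both sides by $(-1)^{p}q_p(q_m)$ and sum over $n'+p=n$ (equivalently, sum $p$ from $0$ to $n-k$ with $n'=n-p$); (3) on the left this produces $\sum_{p=0}^{n-k}(-1)^p q_p(q_m)\,D_k q_{n-p}(q_m)$; (4) on the right, the factor $\sum_{p}(-1)^p q_p(q_m)\,q_{n-p-|I|-|J|}(q_m)$ appears and, by the $\kappa_1(q_m)\kappa_1(-q_m)=1$ cancellation just described, collapses to the single term where the total degree in $q_m$ is zero, i.e.\ where $|I|+|J|=n$, contributing coefficient $1$. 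This leaves precisely
\[
\sum_{\substack{I,J\in\Z^m\\|I|+|J|=n}}(-1)^{(I+J)\cdot\delta-k}q_{(I+J)\cdot\delta-k}(A)\prod_{s=1}^m q_{i_s}(q_{m-s})q_{j_s}(q_{m-s}),
\]
matching the claimed right-hand side.

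The main obstacle I anticipate is bookkeeping the weights and the range of summation carefully enough to see that the $q_m$-cancellation is clean: one has to make sure that every term appearing on the right of Theorem \ref{B-K_analogue} for the various $n'=n-p$ really does organize into blocks indexed by a fixed value of $|I|+|J|$, with the $q_m$-dependence entirely concentrated in the single factor $q_{n'-|I|-|J|}(q_m)$ and nowhere else (in particular not hidden inside the $q_{i_s}(q_{m-s})$ factors, which involve $q_{m-s}$ for $s\ge 1$, not $q_m$ itself — this is fine but should be stated). A secondary subtlety is justifying the identity $\sum_{p\ge 0}(-1)^p q_p(q_m)\,t^p\cdot\sum_{\ell\ge 0}q_\ell(q_m)t^\ell=1$ at the level of formal power series / plethystic alphabets, which is really just $\kappa_t(q_m)\kappa_t(-q_m)=1$ from \eqref{eqn:kappa_z(A+B)}, but one should phrase it so that the substitution of the formal variable is legitimate. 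Once that cancellation is in hand the rest is a direct reindexing, exactly parallel to how Carr\'e and Thibon pass from Butler--King to Murnaghan in the Schur case.
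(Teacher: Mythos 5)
Your proposal is correct and is in substance the paper's own argument: the paper multiplies the identity of Theorem \ref{thm_5_1_analogue} by $\kappa_{-1}(q_m)=1/\kappa_1(q_m)$ and then extracts the weight-$(nm-k)$ component, which is exactly your degree-by-degree convolution of the Butler--King identities (Theorem \ref{B-K_analogue}) against the coefficients of $\kappa_{-1}(q_m)$; your cancellation $\sum_{p}(-1)^pq_p(q_m)q_{N-p}(q_m)=\delta_{N,0}$ is precisely $\kappa_1(q_m)\kappa_{-1}(q_m)=1$ read off in each degree, and your concern about the $q_m$-dependence is unfounded since the factors $q_{i_s}(q_{m-s})$, $q_{j_s}(q_{m-s})$ do not involve $p$. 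One caveat, which your write-up shares with the paper rather than introduces: the convolution naturally runs over all $p$ with $0\le p\le n$, and your parenthetical claim that this is ``equivalently, sum $p$ from $0$ to $n-k$'' is not automatic, because $D_k q_{n'}(q_m)$ need not vanish for $0<n'<k$ when $m>1$ (for instance $D_3q_2(q_2)=8Q_{(1)}\ne0$); both the clean cancellation on the right and the truncation on the left therefore require keeping the full range of $p$, so the upper limit $n-k$ in the stated theorem should be read with this in mind --- the paper's proof is equally silent on the point, so this is a remark on the statement, not a defect peculiar to your approach.
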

    
    \begin{proof}
        First, note that $1/\kappa_1(q_m)=\kappa_{-1}(q_m)$. So, from Theorem \ref{thm_5_1_analogue} we have
        \begin{equation}\label{5.4_analogue}
            \kappa_{-1}(q_m)\cdot D_k \kappa_1(q_m)=\sum_{I,J\in\Z^m}(-1)^{(I+J)\cdot\delta-k}q_{(I+J)\cdot\delta-k}(A)\prod_{s=1}^m q_{i_s}(q_{m-s})q_{j_s}(q_{m-s}).
        \end{equation}
    
        We identify the terms of weight $nm-k$ in \eqref{5.4_analogue}. We see that
        \begin{align*}
            (I+J)\cdot\delta-k+\sum_{s=1}^m (i_s+j_s)(m-s)&=(I+J)\cdot\delta-k+m\sum_{s=1}^m (i_s+j_s)-\sum_{s=1}^m s(i_s+j_s)\\
            &=(I+J)\cdot\delta-k+m(|I|+|J|)-(I+J)\cdot\delta\\
            &=m(|I|+|J|)-k,
        \end{align*}
        and so in the sum on the RHS we require $|I|+|J|=n$.
    \end{proof}

    In practice, the plethysm $F\of G$ is easiest to compute by expanding $F$ and $G$ in the power sum basis, as described in \cite{loehr-remmel}. Thus, using the power sum expansions of the $q_n$'s \cite[p.~260]{macdonald:1995}, we compute $q_n\of q_m$ for small values of $n$ and $m$.

    \begin{remark}\label{remark:q_n(q_m)}
        We have
        \begin{align*}
            q_0(q_n)&=1,\\
            q_n(q_0)&=2, \quad(n\geq1)\\
            q_1(q_n)&=2q_n,\\
            q_2(q_n)&=2q_n^2\\
            &=4Q_{(2n)}+4Q_{(2n-1,1)}+4Q_{(2n-2,n)}+\cdots+4Q_{(n+1,n-1)},\\
            q_3(q_1)&=2q_3+q_1^3\\
            &=6Q_{({3})}+2Q_{({2, 1})},\\
            q_3(q_2)&=2q_3q_2q_1+2q_4q_2-6q_5q_1+6q_6\\
            &=6Q_{({6})}+10Q_{({5, 1})}+14Q_{({4, 2})}+2Q_{({3, 2, 1})},\\
            q_3(q_3)&=2q_2q_3q_4+2q_1q_3q_5-6q_1q_2q_6-6q_4q_5+8q_3q_6+10q_2q_7-6q_1q_8+2q_9\\
            &=6Q_{({9})}+10Q_{({8, 1})}+22Q_{({7, 2})}+24Q_{({6, 3})}+10Q_{({6, 2, 1})}+10Q_{({5, 4})}+14Q_{({5, 3, 1})}+2Q_{({4, 3, 2})}.
        \end{align*}
    \end{remark}

    \begin{example}\label{ex1}
        We wish to verify Theorem \ref{B-K_analogue} for $D_1(q_3\of q_2)$. First, we will compute this directly. By Remark \ref{remark:q_n(q_m)}, we have
        \begin{align*}
            q_3\of q_2&=2\,q_{1}q_{2}q_{3}+2\,q_{2}q_{4}-6\,q_{1}q_{5}+6\,q_{6}\\
            &=6Q_{({6})}+10Q_{({5, 1})}+14Q_{({4, 2})}+2Q_{({3, 2, 1})}.
        \end{align*}
        Therefore, by the definition of $D_1$ we have
        \begin{align*}
            D_1(q_3\of q_2)&=-20Q_{({5})}+4Q_{({3, 2})}.
        \end{align*}

        On the other hand, there are 34 pairs of $I,J\in\Z^2$ that contribute a nonzero term in Theorem \ref{B-K_analogue}. A select few of these are as follows:\\
        
        \begin{center}
        \begin{tabular}{c|c|c}
            $I$ & $J$ & Term (after simplification) \\\hline
            $(0,0)$ & $(0,1)$ & $-16q_2q_3+8q_1q_4$ \\
            $(0,0)$ & $(0,2)$ & $-4q_2q_3$ \\
            $(0,1)$ & $(0,0)$ & $-16q_2q_3+8q_1q_4$ \\
            $(1,1)$ & $(0,1)$ & $8q_1q_4$ \\
            $(2,0)$ & $(0,1)$ & $-8q_2q_3$
        \end{tabular}
        \end{center}
        \vspace{1em}
        After summing all 34 terms, we are again left with 
        \[2\,q_{1}q_{2}q_{3}+2\,q_{2}q_{4}-6\,q_{1}q_{5}+6\,q_{6}.\]




    \end{example}

    \subsection{Maximal First Part}

    Similarly to Carr\'e and Thibon, we now consider the plethysm $q_n\of Q_\lambda$, expanded as a sum of the form $\sum_\mu d_\mu Q_\mu$. We wish to find a formula for the nonzero terms in this expansion where $\mu$ has the largest possible first part. In \cite{carre-thibon:1992}, Carr\'e and Thibon found analogous formulas for Schur functions for the terms with maximal first part, as well as for terms with maximal length.
    
    \begin{proposition}\label{max first part}
        Let $\lambda$ be a partition and $n>0$ be a positive integer, then
        \[D_{n\lambda_1}(q_n\of Q_\lambda)=q_n\of (2Q_{\lambda/(\lambda_1)}).\]
    \end{proposition}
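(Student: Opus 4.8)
The plan is to mirror the vertex-operator proof of the recurrence formula (Theorem~\ref{thm_5_1_analogue}): apply $\kappa_z\cdot\kappa_{-1/z}^\perp$ to $q_n\of Q_\lambda$ and compute the coefficient of the most negative power of $z$ in two ways. On the one hand, writing $q_n\of Q_\lambda=\sum_\mu d_\mu Q_\mu$ and using Theorem~\ref{vertex operator identity} with Remark~\ref{Q_(-k)lam}, the coefficient of $z^{-k}$ equals $(-1)^k D_k(q_n\of Q_\lambda)$ for every $k>0$. On the other hand, since $\kappa_{-1/z}^\perp$ acts as the substitution $A\mapsto A-1/z$ (Lemma~\ref{F(A+z)}) and plethysm is associative,
\[\kappa_z\cdot\kappa_{-1/z}^\perp(q_n\of Q_\lambda)=\kappa_z(A)\cdot q_n\bigl(Q_\lambda(A-1/z)\bigr).\]
The goal is to show the most negative power of $z$ occurring on the right is $z^{-n\lambda_1}$, with coefficient $(-1)^{n\lambda_1}(q_n\of 2Q_{\lambda/(\lambda_1)})(A)$; equating with the first computation at $k=n\lambda_1$ and cancelling the sign then gives the proposition.

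To carry this out I would first expand $Q_\lambda(A-1/z)$ by the sum rule and Proposition~\ref{Q_lambda(z)}, where only one-row skew shapes survive:
\[Q_\lambda(A-1/z)=Q_\lambda(A)+U,\qquad U=2\sum_{i=1}^{\lambda_1}Q_{\lambda/(i)}(A)\,(-1/z)^i,\]
the sum stopping at $\lambda_1$ since $Q_{\lambda/(i)}=0$ for $i>\lambda_1$. Writing $w=-1/z$ and grading by powers of $w$, the element $U$ has $w$-degrees in $\{1,\dots,\lambda_1\}$ with leading term $2Q_{\lambda/(\lambda_1)}(A)\,w^{\lambda_1}$. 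Then I would use the addition formula $q_n(X+Y)=\sum_j q_j(X)\,q_{n-j}(Y)$ (from $\kappa_t(X+Y)=\kappa_t(X)\kappa_t(Y)$), together with two facts: $q_j$ vanishes in negative degree, and this persists for the virtual element $U$ because $\kappa_t(U)$ is a power series in $t$ with non-negative exponents; and $q_m\of U$ has $w$-degree at most $\lambda_1 m$ (seen, say, by writing $q_m$ in the power-sum basis and using additivity and multiplicativity of $p_k$ under plethysm). Hence $q_n\bigl(Q_\lambda(A-1/z)\bigr)=\sum_{j=0}^n q_j(Q_\lambda(A))\,q_{n-j}(U)$ has $w$-degree at most $n\lambda_1$, and only the $j=0$ summand can reach $w^{n\lambda_1}$, so that coefficient equals the coefficient of $w^{n\lambda_1}$ in $q_n(U)$.

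For this last coefficient I would split $U=U_{<}+2Q_{\lambda/(\lambda_1)}(A)w^{\lambda_1}$ with $U_{<}$ of $w$-degree $<\lambda_1$, apply the addition formula once more, and use the homogeneity/scaling property $q_b\bigl(2Q_{\lambda/(\lambda_1)}(A)w^{\lambda_1}\bigr)=w^{b\lambda_1}(q_b\of 2Q_{\lambda/(\lambda_1)})(A)$ (a consequence of the reasoning behind Proposition~\ref{plethysm zA}). A degree count again shows only the $b=n$, $U_{<}$-degree-$0$ term reaches $w$-degree $n\lambda_1$, so the coefficient of $w^{n\lambda_1}$ in $q_n(U)$ is $(q_n\of 2Q_{\lambda/(\lambda_1)})(A)$. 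Since $\kappa_z(A)$ contributes only non-negative powers of $z$, substituting $w^{n\lambda_1}=(-1)^{n\lambda_1}z^{-n\lambda_1}$ identifies the coefficient of $z^{-n\lambda_1}$ in $\kappa_z\cdot\kappa_{-1/z}^\perp(q_n\of Q_\lambda)$ as $(-1)^{n\lambda_1}(q_n\of 2Q_{\lambda/(\lambda_1)})(A)$, and no more negative power appears.

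The step I expect to be the main obstacle is the degree bookkeeping in the $w$-grading: establishing rigorously that $q_m\of U$ has $w$-degree bounded by $\lambda_1 m$, and more importantly that the top-degree coefficient is captured without cancellation by applying $q_n$ to the leading coefficient of $U$. This depends on the homogeneity of $q_n$ and on the vanishing of $q_j$ in negative degree for virtual arguments. Everything else — the sum rule expansion, Proposition~\ref{Q_lambda(z)}, and the translation between $\kappa_z\cdot\kappa_{-1/z}^\perp$ and $D_k$ — is a direct reuse of machinery already assembled in the paper.
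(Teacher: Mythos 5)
Your proposal is correct and follows essentially the same route as the paper's proof: apply $\kappa_z\cdot\kappa_{-1/z}^\perp$ to $q_n\of Q_\lambda=\sum_\mu d_\mu Q_\mu$, use Theorem~\ref{vertex operator identity} with Remark~\ref{Q_(-k)lam} on one side and Lemma~\ref{F(A+z)} plus the sum rule and Proposition~\ref{Q_lambda(z)} on the other, and extract the coefficient of $z^{-n\lambda_1}$. The only difference is that you spell out, via the power-sum expansion and the addition formula for $q_n$, the degree bookkeeping showing the top coefficient is $q_n$ applied to the leading term $2Q_{\lambda/(\lambda_1)}$ --- a step the paper asserts directly --- and that argument is sound.
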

    
    \begin{proof}
        Suppose the expansion of $q_n\of Q_\lambda$ is $q_n\of Q_\lambda=\sum_\mu d_\mu Q_\mu$. We apply the operator $\kappa_z\kappa_{-1/z}^\perp$ to both sides of this expansion. On the RHS, we get $\sum_\mu\sum_{p\in\Z}d_\mu Q_{p\mu}z^p$. On the LHS, we get
        \[\kappa_z\kappa_{-1/z}^\perp\sum_\mu d_\mu Q_\mu=\kappa_z(A)q_n(Q_\lambda(A-1/z))\]
        by applying Theorem \ref{vertex operator identity}. Then, we can use the sum rule to get
        \begin{align*}
            &\kappa_z(A)q_n\left(\sum_\nu Q_{\lambda/\nu}(A)Q_\nu(-1/z)\right)\\
            &=\kappa_z(A)q_n\left(\sum_k Q_{\lambda/(k)}(A)q_k(-1/z)\right)\\
            &=\kappa_z(A)q_n\left(Q_\lambda(A)+2\sum_{k\geq1}(-1/z)^k Q_{\lambda/(k)}(A)\right)
        \end{align*}
        since $Q_\nu(-1/z)$ is $0$ if $\ell(\nu)>1$. Therefore, we have
        \begin{equation}\label{maximal intermediate eqn}
            \kappa_z(A)q_n\left(Q_\lambda(A)+2\sum_{k\geq1}(-1/z)^k Q_{\lambda/(k)}(A)\right)=\sum_\mu\sum_{p\in\Z}d_\mu Q_{p\mu}z^p.
        \end{equation}
        Note that $Q_{\lambda/(k)}=0$ for $k>\lambda_1$, so the lowest power of $z$ on the LHS of \eqref{maximal intermediate eqn} is $z^{-n\lambda_1}$, and its coefficient is
        \begin{align*}
            q_n(2(-1)^{\lambda_1}Q_{\lambda/(\lambda_1)}(A))&=(-1)^{n\lambda_1}q_n(2Q_{\lambda/(\lambda_1)}).
        \end{align*}
        Let $h=n\lambda_1$, then on the RHS of \eqref{maximal intermediate eqn}, the coefficient of $z^{-h}$ is 
        \begin{align*}
            \sum_\mu d_\mu Q_{(-h)\mu}&=\sum_\mu d_\mu(-1)^h D_h Q_\mu\\
            &=(-1)^hD_h\sum_\mu d_\mu Q_\mu\\
            &=(-1)^hD_h(d_n\of Q_\lambda).
        \end{align*}
        Therefore, equating coefficients we get
        \[(-1)^hD_h(d_n\of Q_\lambda)=(-1)^hq_n(2Q_{\lambda/(\lambda_1)}).\]
    \end{proof}




    \begin{example}
        We have that $Q_{\lambda/(\lambda_1)}=Q_{\lambda\setminus\{\lambda_1\}}$, and so by Proposition \ref{max first part} we have
        \[D_{n\lambda_1}(q_n\of Q_\lambda)=q_n\of 2Q_{\lambda\setminus\{\lambda_1\}}.\]
        In particular, if $n=1$, then we see that we have
        \[D_{\lambda_1}(q_1\of Q_\lambda)=q_1\of 2Q_{\lambda\setminus\{\lambda_1\}}=4Q_{\lambda\setminus\{\lambda_1\}}.\]
        Computing the action of $D_{\lambda_1}$ directly with \eqref{def D_k}, we also get
        \[D_{\lambda_1}(q_1\of Q_\lambda)=D_{\lambda_1}(2Q_\lambda)=4(-1)^2Q_{\lambda\setminus\{\lambda_1\}}.\]
    \end{example}

    \appendix

    \section{Schur Functions and the Ring $\Lambda$}\label{appendix:schur}

    Let
    \begin{equation}\label{eqn:sigma_z}
        \sigma_z(A):=\prod_{a\in A}\frac{1}{1-za}=\sum_{n\in\Z}h_n(A)z^n
    \end{equation}
    and
    \begin{equation}\label{eqn:lambda_z}
        \lambda_z(A):=\prod_{a\in A}(1+za)=\sum_{n\in\Z}e_n(A)z^n
    \end{equation}
    be the generating functions of the homogeneous symmetric functions $h_n$ and elementary symmetric functions $e_n$, respectively. The ring $\Lambda$ of symmetric functions in the variables of $A$ is defined
    \[\Lambda:=\Q[h_1,h_2,h_3,\ldots]=\Q[e_1,e_2,e_3,\ldots].\]
    For two compositions $\lambda,\mu$, we define the skew Schur function $S_{\lambda/\mu}$ by
    \[S_{\lambda/\mu}:=\det(h_{\lambda_i-\mu_j-i+j}),\]
    and the Schur function $S_\lambda:=S_{\lambda/0}$. We define $\omega:\Lambda\to\Lambda$ by $\omega(h_n)=e_n$ for all $n\geq0$, and it follows that $\omega$ is an involution, i.e., $\omega(e_n)=h_n$. 

    We use the $\lambda$-ring definition of plethysm from \cite{lascoux:2003} to define plethysm on $\Lambda$. For $P=\sum_\mu c_\mu x^\mu\in\C[X]$, we define the plethysm $h_n(P)$ by
    \begin{equation}
        \sigma_z(P)=\prod_{a\in A}\left(\frac{1}{1-zx^\mu}\right)^{c_\mu}=\sum_{n\in\Z}h_n(P)z^n.
    \end{equation}
    For any polynomial in the $h_n(A)$'s, say $F(A)=\cF(h_1(A),h_2(A),\ldots)$. So, we define plethysm of $F$ on $P$ as
    \[F(P):=\cF(h_1(P),h_2(P),\ldots).\]
    Equivalently, we can define $e_n(P)$ by 
    \begin{equation}
        \lambda_z(P)=\prod_{a\in A}(1+zx^\mu)^{c_\mu}=\sum_{n\in\Z}e_n(P)z^n
    \end{equation}
    since $\sigma_z=\lambda_{-z}^{-1}$. Then, because $\kappa_z(P)=\sigma_z(P)\lambda_z(P)$, we have the following.

    \begin{remark}
        Our definition (\ref{eqn:kappa_z(P)}) of plethysm in $\Gamma$ is equivalent to the definition of plethysm in the ring $\Lambda$ of all symmetric functions when restricted to $\Gamma$.
    \end{remark}

    \section{Combinatorial Interpretations of Schur's $Q$-functions}\label{appendix:combinatorial}

    First, we note that for $n\geq1$ we have
    \[q_n=\sum_{\mu}2^{\ell(\mu)}m_\mu,\]
    where $m_\mu$ is the \emph{monomial symmetric function} indexed by $\mu$, and the sum ranges over partitions such that $|\mu|=n$. In terms of the Schur functions, we get
    \[q_n=S_{(n)}+S_{(n-1,1)}+\cdots+S_{(1^n)}.\]

    Next, we have
    \[\kappa_z=\prod_{i\geq 1}(1+2a_iz+2(a_iz)^2+2(a_iz)^3+\cdots),\]
    and so the two summands of $2(a_iz)^r$ corresponds to the semistandard Young tableaux 
    \[
        \begin{ytableau}
            $i'$ & $i$ & ~~\cdots~~ & $i$
        \end{ytableau}
        \qquad\text{and}\qquad
        \begin{ytableau}
            $i$ & $i$ & ~~\cdots~~ & $i$
        \end{ytableau}
    \]
    of shape $(r)$. It follows that $q_n$ enumerates the the semistandard Young tableaux of shape $(n)$ with entries in the ordered alphabet $1<1'<2<2'<\cdots$ such that each $k'$ appears at most once. The resulting theory of shifted Young tableaux provides a combinatorial construction of $Q_\lambda$ \cite{sagan:1987}.

    \section*{Acknowledgment}

    We thank the anonymous referee for the valuable insights and feedback.

    \section*{Declarations}

    \noindent\textbf{Ethical Approval:} Not applicable.

    \noindent\textbf{Funding:} Partially supported by Simons Foundation grant MP-TSM-00002518 and NSFC 12171303.

    \noindent\textbf{Availability of data and materials:} Not applicable.

    \bibliographystyle{alpha}
    \bibliography{references}

    \bigskip
    \bigskip

    
    
    
    

\end{document}